\documentclass[11pt]{amsart}
\usepackage{amsmath,amsthm,amssymb}
\usepackage{fullpage}
\usepackage{hyperref}
\usepackage{verbatim}
\usepackage{xcolor}
\usepackage[all]{xy}
  \SelectTips{cm}{10}
  \everyxy={<2.5em,0em>:} 
\usepackage{tikz}
\usepackage{picinpar} 

\renewcommand{\theequation}{\thesection.\arabic{equation}}
\makeatletter\@addtoreset{equation}{section}\makeatother

\theoremstyle{plain}
  \newtheorem{theorem}[equation]{Theorem}
  \newtheorem*{theorem*}{Theorem}
  \newtheorem{proposition}[equation]{Proposition}

\theoremstyle{definition}
  \newtheorem{definition}[equation]{Definition}
  \newtheorem{lemma}[equation]{Lemma}
  \newtheorem{corollary}[equation]{Corollary}

\theoremstyle{remark}
  
  \newtheorem{remark}[equation]{Remark}

 \DeclareFontFamily{U}{manual}{}
 \DeclareFontShape{U}{manual}{m}{n}{ <->  manfnt }{}
 \newcommand{\manfntsymbol}[1]{%
    {\fontencoding{U}\fontfamily{manual}\selectfont\symbol{#1}}}

\makeatletter
 %
    {\hspace*{\fill}
     \endgraf\endgroup\end{trivlist}}
 \newenvironment{example}[1][]{
   \refstepcounter{equation}
   \begin{proof}[Example~\theequation%
   \@ifnotempty{#1}{ (#1)}.]
   }
  {\end{proof}}
\makeatother

 \DeclareFontFamily{OT1}{pzc}{}
 \DeclareFontShape{OT1}{pzc}{m}{it}{<-> s * [1.100] pzcmi7t}{}
 \DeclareMathAlphabet{\mathpzc}{OT1}{pzc}{m}{it}

\newcommand{\hhat}[1]{\widehat{#1}}

\DeclareMathOperator{\colim}{colim}

\renewcommand{\AA}{\mathbb A}

\newcommand{\gp}{\mathrm{gp}}
\newcommand{\anton}[1]{{\color{red}[[\ensuremath{\bigstar\bigstar\bigstar} #1]]}}

\newcommand{\<}{\langle}
\renewcommand{\>}{\rangle} 
\newcommand{\m}{\mathfrak m}
\newcommand{\D}{\mathcal D}
\renewcommand{\L}{\mathcal L}
\newcommand{\E}{\mathcal E}

\newcommand{\U}{\mathcal U}
\newcommand{\I}{\mathcal I}

\newcommand{\X}{\mathcal X}
\newcommand{\Y}{\mathcal Y}
\newcommand{\Z}{\mathcal Z}
\newcommand{\ZZ}{\mathbb Z}

\newcommand{\QQ}{\mathbb Q}

\newcommand{\GG}{\mathbb G}

\newcommand{\bbar}[1]{\overline{#1}}
\newcommand{\ttilde}[1]{\widetilde{#1}}

\renewcommand{\setminus}{\smallsetminus}

\let\hom\relax
\DeclareMathOperator{\hom}{Hom}
\renewcommand{\O}{\mathcal{O}}
\DeclareMathOperator{\aut}{Aut}

\newcommand{\Ga}{\Gamma}
\DeclareMathOperator{\spec}{Spec}

\DeclareMathOperator{\sym}{Sym}
\newcommand{\smat}[1]{\left(\begin{smallmatrix}#1\end{smallmatrix}\right)}
\DeclareMathOperator{\stab}{Stab}
\newcommand{\F}{\mathcal F}

\newcommand{\g}{\mathfrak g}

 \def\ari[#1]{\ar@{^(->}[#1]}
 \def\are[#1]{\ar[#1]^{\txt{\'et}}}
 \def\areh[#1]{\ar[#1]|{\txt{$H$-eq}}^{\txt{\'et}}}
 \def\ars[#1]{\ar@{->>}[#1]}
 \newcommand{\dplus}{\ar@{}[d]|{\mbox{$\oplus$}}}
 \newcommand{\dtimes}{\ar@{}[d]|{\mbox{$\times$}}}

\parskip 3pt

\begin{document}
\title{Toric Stacks II: Intrinsic Characterization of Toric Stacks}
\author{Anton Geraschenko}
\author{Matthew Satriano}
\thanks{The second author was partially supported by NSF grant DMS-0943832.}
  \subjclass[2010]{
  14D23,
  14M25.
  }
\date{}

\begin{abstract}
 The purpose of this paper and its prequel \cite{toricartin1} is to introduce and develop a theory of toric stacks which encompasses and extends the notions of toric stacks defined in \cite{lafforgue,bcs,fmn,iwanari,can,tyomkin}, as well as classical toric varieties.

 While the focus of the prequel \cite{toricartin1} is on how to work with toric stacks, the focus of this paper is how to show a stack is toric. For toric varieties, a classical result says that a finite type scheme with an action of a dense open torus arises from a fan if and only if it is normal and separated. In \cite[Theorem 7.24]{fmn} and \cite[Theorem 1.3]{iwanari}, it is shown that a smooth separated DM stack with an action of a dense open torus arises from a stacky fan. In the same spirit, the main result of this paper is that any Artin stack with an action of a dense open torus arises from a stacky fan under reasonable hypotheses.
\end{abstract}
\maketitle

\tableofcontents

\section{Introduction}\label{sec:introduction}
This paper, together with its prequel \cite{toricartin1}, introduces a theory of toric stacks which encompasses and extends the many pre-existing theories in the literature \cite{lafforgue,bcs,fmn,iwanari,can,tyomkin}. Recall from \cite[Definition 1.1]{toricartin1} that a \emph{toric stack} is defined to be the stack quotient $[X/G]$ of a normal toric variety $X$ by a subgroup $G$ of the torus of $X$. As with toric varieties, one can understand toric stacks through a combinatorial theory of \emph{stacky fans}. In Toric Stacks I \cite{toricartin1}, we introduce the notion of stacky fan, show that every toric stack comes from a stacky fan, and develop a rich dictionary between stacky fans and their associated toric stacks, thereby allowing one to easily read off properties of a toric stack from its stacky fan.

In contrast to \cite{toricartin1}, which develops the tools to study a toric stack, the focus of this paper is how to show that a given stack is toric in the first place. A classical result (see for example \cite[Corollary 3.1.8]{cls}) shows that if $X$ is a finite type \emph{scheme} with a dense open torus $T$ whose action on itself extends to $X$, then $X$ is a toric variety if and only if it is normal and separated. Similarly, the main result of this paper states that with suitable hypotheses, if $\X$ is an Artin stack with a dense open torus $T$ whose action on itself extends to an action of $\X$, then $\X$ is a toric stack:

{\renewcommand{\theequation}{\ref{thm:main}}
\begin{theorem}
 Let $\X$ be an Artin stack of finite type over an algebraically closed field $k$ of characteristic $0$. Suppose $\X$ has an action of a torus $T$ and a dense open substack which is $T$-equivariantly isomorphic to $T$. Then $\X$ is a toric stack if and only if the following conditions hold:
 \begin{enumerate}
  \item $\X$ is normal,
  \item $\X$ has affine diagonal,
  \item geometric points of $\X$ have linearly reductive stabilizers, and
  \item \label{intro:global-type} every point of $[\X/T]$ is in the image of an \'etale representable map from a stack of the form $[U/G]$, where $U$ is quasi-affine and $G$ is an affine group. (See Definition \ref{def:global-type} and Remark \ref{rmk:everything-is-global-type}; a forthcoming result of Alper, Hall, and Rydh shows that this condition is superfluous.)
 \end{enumerate}
\end{theorem}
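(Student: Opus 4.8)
The plan is to treat the two implications separately, with essentially all of the work in the ``if'' direction. For the ``only if'' direction, suppose $\X = [X/G]$ with $X$ a normal toric variety with torus $T_X$ and $G \le T_X$, so that $T = T_X/G$. I would verify the four conditions directly from the presentation: normality of $\X$ is smooth-local on the cover $X \to \X$ and holds because $X$ is normal; affine diagonal follows by factoring $\Delta_\X$ through the map $\X \to BG$ induced by $X \to \spec k$, using that $X$ is separated (hence has affine diagonal) and that $G$ is affine; the stabilizer of a geometric point lying over $x \in X$ is the subgroup $G \cap \stab_{T_X}(x)$ of the torus $G$, hence diagonalizable and so linearly reductive in characteristic $0$; and finally $[\X/T] \cong [X/T_X]$, which is covered by the open substacks $[U_\sigma/T_X]$ attached to the affine toric charts $U_\sigma$ of the fan, each of the required form with $U_\sigma$ (quasi-)affine and $T_X$ affine.

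For the ``if'' direction I would first pass to $\Y := [\X/T]$. Since $T$ acts on itself freely and transitively, the hypothesized dense open $T \subseteq \X$ descends to a dense open point $\spec k \hookrightarrow \Y$ with trivial stabilizer, and $\Y$ inherits finite type, normality, affine diagonal, and linearly reductive stabilizers. Condition (\ref{intro:global-type}) is precisely a statement about $\Y$, and pulling its charts back along $\X \to \Y$ produces a cover of $\X$ by $T$-invariant, \'etale-representable charts modeled on $[U/G]$ with $U$ quasi-affine and $G$ affine. By the dictionary of \cite{toricartin1} it then suffices to produce $T$-invariant affine toric charts covering $\X$ whose combinatorial data assemble into a stacky fan.

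The heart of the argument, and the step I expect to be the main obstacle, is the \emph{local} case: showing that such a $T$-invariant chart $\W \subseteq \X$ is an affine toric stack. Here I would first use linear reductivity of the stabilizers, together with the affine diagonal and the quotient presentation coming from condition (\ref{intro:global-type}), to produce a good moduli space $\pi \colon \W \to W_0$ with $W_0$ a normal affine variety (this is where the theory of good moduli spaces in characteristic $0$ enters, and where the quasi-affineness of $U$ makes existence genuinely delicate). Over the dense torus $\pi$ is an isomorphism, so $W_0$ is a normal affine variety with a $T$-action and dense orbit isomorphic to $T$; by the classical characterization of affine toric varieties, $W_0 \cong \spec k[\sigma^\vee \cap M]$ for a cone $\sigma$ in $N_{\mathbb{R}}$, where $N$ is the cocharacter lattice of $T$ and $M = N^\vee$. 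It remains to reconstruct the stack structure of $\W$ over $W_0$: reading the grading on a local affine presentation $[\spec A / G]$ off the characters of $G$, I would package the stabilizer group schemes along the torus-invariant strata into a homomorphism $\beta \colon L \to N$ from the lattice of the chart to $N$, lift $\sigma$ to a stacky cone $(\tau, \beta)$ with $\tau \subseteq L_{\mathbb{R}}$, and invoke the classification of affine toric stacks in \cite{toricartin1} to identify $\W$ with its associated toric stack. The delicate points are the existence of the good moduli space in this generality and the faithful recovery of $\beta$, since the torsion and cokernel of $\beta$ along $\tau$ must reproduce the stabilizers exactly.

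Finally I would glue. The local stacky cones $(\tau_i, \beta)$ live over a common $N$, and an overlap $\W_i \cap \W_j$ has affine good moduli space equal to the affine toric variety of a common face, so the cones meet along faces and assemble into a fan $\Sigma$ on $L$; here normality and affine diagonal of $\X$ play the role that ``normal and separated'' plays in the classical scheme-theoretic statement, guaranteeing that the charts patch into a genuine fan rather than something with bad overlaps. The compatible homomorphisms glue to a single $\beta \colon L \to N$, yielding a stacky fan $(\Sigma, \beta)$, and by \cite{toricartin1} its associated toric stack is $\X$, completing the argument. I expect the two genuinely hard inputs to be the local structure / good-moduli-space step and the verification that the cones glue correctly; the gluing of the maps $\beta$ and the entire ``only if'' direction should be comparatively formal.
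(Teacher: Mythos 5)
Your reduction to $[\X/T]$ and the overall shape (local toric charts, then glue) match the paper, but your local step has a genuine gap. You propose to build a good moduli space $\pi\colon \W\to W_0$, identify $W_0$ as an affine toric variety, and then ``reconstruct the stack structure of $\W$ over $W_0$'' from the stabilizers along the strata. This does not work as stated: a stack is not determined by its good moduli space together with its stabilizer group schemes, and more importantly, reading a grading ``off the characters of $G$'' presupposes that the relevant group is diagonalizable --- which is precisely the hard point. Condition (4) only gives charts $[U/G]$ with $G$ an arbitrary affine group, so a priori the local model could be a quotient by a nonabelian reductive group (and then it is not a toric stack; cf.\ Example \ref{Eg:non-diag-stabilizers}). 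The paper's local structure theorem (Theorem \ref{thm:local-structure}) handles this by a weak Luna slice (Proposition \ref{P:luna}) producing an \'etale representable chart $[Z/H]\to\X$ with $H$ the stabilizer, then proving $Z$ is \emph{affine} by an orbit-closure argument, and finally invoking Proposition \ref{prop:characterization-pointed-toric} (via the highest-weight-vector argument of Proposition \ref{prop:unstable-closure-contains-highest-weight}) to show that a linearly reductive group fixing a point of an irreducible affine scheme with a dense stabilizer-free orbit and reductive stabilizers must be a \emph{torus}. Without that input your chart is never exhibited as $[\text{affine toric variety}/\text{subtorus}]$, and the good moduli space you want need not even be known to exist at that stage.

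Two further points where the paper diverges from (and repairs) your sketch. First, the singular case is not handled directly: the paper proves the smooth case (Theorem \ref{thm:main-smooth}) by a separate induction on the number of divisors, mapping $\X$ to $[\AA^n/\GG_m^n]$, and then treats general $\X$ by gluing the canonical smooth stacks over the local charts into a smooth toric $\Y\to\X$ and descending via Theorem \ref{thm:local-gms-data}. Second, your gluing step asserts that the local cones ``assemble into a fan'' over a single lattice $L$, but the lattices $L_i$ of the local charts are a priori unrelated; producing a common $L$ is exactly what the colimit-of-tight-diagrams-of-toric-monoids machinery (Lemma \ref{lem:linear-functionals-from-join-closed}, Corollary \ref{cor:limits-of-toric-monoids}) accomplishes, and it needs the overlaps to be cohomologically affine, which is checked on the canonical smooth stack. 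So the architecture you propose is recognizably the right one, but both steps you flag as ``hard'' require ideas (the torus-stabilizer theorem and the monoid-colimit gluing) that are absent from the proposal and cannot be supplied by the good-moduli-space reconstruction you describe.
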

\addtocounter{equation}{-1}}

Note that unlike the classical result about toric varieties,
we cannot require our stacks to be separated. Indeed, algebraic stacks which are not Deligne-Mumford are hardly ever separated. The condition that the stack have affine diagonal essentially replaces the separatedness condition (see Remark \ref{rmk:affine-diagonal<->separated}). In particular, there exist ``toric schemes'': toric stacks which are schemes, but which are not toric varieties because they are not separated (see Example \ref{Eg:non-affine-diag}).

A forthcoming result of Alper, Hall, and Rydh implies that condition (\ref{intro:global-type}) follows from the other hypotheses and can therefore be removed (see Remark \ref{rmk:everything-is-global-type}(\ref{alper-hall-rydh})). Given that result, we recover new proofs (in the case of trivial generic stabilizer) of \cite[Theorem 1.3]{iwanari} and \cite[Theorem 7.24]{fmn}, which establish the analogous result for toric stacks which are smooth, separated, and Deligne-Mumford.

Notably, the results in \cite{iwanari} and \cite{fmn} impose the hypothesis that the stack has a coarse space which is a scheme. In contrast, the characterization given by Theorem \ref{thm:main} does not assume that $\X$ has a coarse (or good) moduli space \emph{at all}. As a corollary, we see that any toric algebraic space satisfying the conditions of the theorem is in fact a scheme (see Remark \ref{rmk:no-toric-spaces} and Example \ref{eg:a-toric-space}).

\begin{remark}
The techniques in this paper work over any separably closed field, but we work over an algebraically closed field $k$ of characteristic zero to avoid confusing hypotheses (e.g.~that every group we consider is smooth).
\end{remark}

\subsection*{Logical Dependence of Sections}

The logical dependence of sections is roughly as follows:
\[\xymatrix@!0 @C+1.5pc @R+1pc{
   & {2}\ar[dd]\ar[dl]\ar[dr] \POS p+(0,.7) *\txt{Toric Stacks I \cite{toricartin1}} & & & & {\ref{sec:technical}}\ar[d] \POS p+(-.7,.7) *\txt{Toric Stacks II \cite{toricartin2}}\\
 {A}\ar[dd]\ar[dr] & & {7} & & & {\ref{sec:local-structure}}\ar[d]\\
 & {3}\ar[dl]\ar[d]\ar[dr]\ar[r]\ar[r] & {B}\ar[rr] & & {\ref{sec:local-construction}} \ar[dr] & {\ref{sec:main-smooth}} \ar[d]
 \\
 {6} & {4} &  {5}\ar[rrr] & & & {\ref{sec:main}}
}\]

\subsection*{Acknowledgments}
We thank Jesse Kass and Martin Olsson for conversations which helped get this project started, and Vera Serganova and the MathOverflow community (especially Torsten Ekedahl, Jim Humphreys, Peter McNamara, David Speyer, and Angelo Vistoli) for their help with several technical points. We also thank Smiley for helping to track down many references. Finally, we would like to thank the anonymous referee for helpful suggestions and interesting questions.

\section{Local Construction of Toric Stacks}\label{sec:local-construction}

The main goal of this section is to prove Theorem \ref{thm:local-gms-data}.


\subsection{Colimits of Toric Monoids}

\begin{definition}\label{def:toric-monoid}
 A \emph{toric monoid} is any monoid of the form $\sigma\cap L$, where $\sigma$ is a pointed cone in a lattice $L$.
\end{definition}
\begin{remark}
 Toric monoids are precisely the finitely generated, commutative, torsion-free monoids $M$ so that $M\to M^\gp$ is injective and saturated.
\end{remark}
\begin{remark}
 Colimits exist in the category of toric monoids. A diagram of toric monoids $D$ induces a diagram of free abelian groups $D^\gp$. Let $L$ be the colimit of $D^\gp$ in the category of free abelian groups. Then the colimit of $D$ is the image in $L$ of the direct sum of all the objects of $D$. In particular, $\colim(D)^\gp = \colim(D^\gp)$.
\end{remark}

\begin{definition}\label{def:face-of-monoid}
 A \emph{face} of a monoid $M$ is a submonoid $F$ so that $a+b\in F$ implies $a,b\in F$.
\end{definition}
\begin{remark}
 For a toric monoid $\sigma\cap L$, the faces are precisely submonoids of the form $\tau\cap L$, where $\tau$ is a face of $\sigma$. So the faces of $\sigma\cap L$ are obtained as the vanishing loci of linear functionals on $L$ which are non-negative on $\sigma$.
\end{remark}
\begin{remark}\label{rmk:linear-functionals-from-faces}
 If $F$ is a face of a toric monoid $M$, then $F^\gp\to M^\gp$ is a saturated inclusion, so it is the inclusion of a direct summand. In particular, any linear functional on $F^\gp$ can be extended to a linear functional on $M^\gp$. Since $F$ is a face of $M$, there is a linear functional $\chi$ on $M^\gp$ which is non-negative on $M$ and vanishes precisely on $F$. Given any linear functional on $F^\gp$ which is non-negative on $F$, we extend it arbitrarily to a linear functional on $M^\gp$. By then adding a large multiple of $\chi$, we can guarantee that the extension is positive away from $F$.
\end{remark}

\begin{definition}\label{def:tight-diagram}
 Let $D$ be a finite diagram in the category of toric monoids (i.e.~$D$ is a collection of toric monoids $D_i$ and a collection of morphisms between the monoids). We say $D$ is \emph{tight} if
 \begin{enumerate}
 \item every morphism is an inclusion of a proper face,
 \item if $D_i$ appears in $D$, then all the faces of $D_i$ appear in $D$,
 \item the diagram commutes, and
 \item any two objects $D_i$ and $D_j$ in $D$ have a unique maximal common face in $D$.
\end{enumerate}
\end{definition}
\begin{remark}\label{rmk:tight-diagram}
 The motivation for Definition \ref{def:tight-diagram} is that for any fan $\Sigma$ on a lattice $L$, the diagram of toric monoids $\{\sigma\cap L|\sigma\in\Sigma\}$ is tight. The goal of this subsection is to show that any tight diagram of toric monoids is realized by a fan in this way. Indeed, Corollary \ref{cor:limits-of-toric-monoids} shows that any tight diagram of toric monoids can be realized by a subfan of the fan generated by a single cone.
\end{remark}

\begin{definition}\label{def:join-closed}
 A tight subdiagram $D^0$ of a tight diagram $D$ is \emph{join-closed} if whenever two objects of $D^0$ have a join in the poset $D$, they have the same join in the poset $D^0$. That is, for every pair of objects $D_i$ and $D_j$ of $D^0$, if they are both faces of an object $D_k$ of $D$, then the smallest face of $D_k$ containing $D_i$ and $D_j$ is in $D^0$.
\end{definition}

\begin{lemma}\label{lem:linear-functionals-from-join-closed}
 Let $D^0$ be a join-closed subdiagram of a tight diagram $D$. Suppose $\chi$ is a linear functional on $\colim(D^0)^\gp$. Then $\chi$ can be extended to a linear functional on $\colim(D)^\gp$. Moreover, if $\chi$ induces non-negative functions on all objects of $D^0$, then the extension can be chosen to be non-negative on all objects of $D$, and it can be chosen to be strictly positive away from $D^0$.
\end{lemma}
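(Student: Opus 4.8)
The plan is to use the universal property of the colimit throughout. A linear functional on $\colim(D^0)^\gp=\colim((D^0)^\gp)$ is the same datum as a family $\{\chi_i\}$ of functionals $\chi_i\colon D_i^\gp\to\QQ$, one for each object $D_i$ of $D^0$, compatible along every morphism of $D^0$; extending $\chi$ to $\colim(D)^\gp$ amounts to enlarging this to a compatible family $\{\tilde\chi_k\}$ indexed by all objects of $D$. Since the only morphisms in a tight diagram are face inclusions, such a family is compatible as soon as each $\tilde\chi_k$ restricts to $\tilde\chi_i$ on every face $D_i$ of $D_k$. I would therefore construct the family by induction on the rank of $D_k^\gp$, extending over the faces of $D_k$ at each stage. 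Because each single face inclusion $D_i^\gp\hookrightarrow D_k^\gp$ is split (Remark \ref{rmk:linear-functionals-from-faces}), extending one functional never obstructs; the only real difficulty is to extend compatibly with all faces of $D_k$ at once.

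The combinatorial input that makes this work is join-closedness. For an object $D_k$, consider those faces of $D_k$ that lie in $D^0$. Any two of them are faces of $D_k$ and hence have a join in the poset $D$, which by Definition \ref{def:join-closed} again lies in $D^0$; as $D$ is finite, this collection has a unique maximal member $F_k\in D^0$, and one checks $F_i=F_k\cap D_i$ for every face $D_i\le D_k$. In particular the set of elements of $D_k$ lying in some object of $D^0$ is exactly the single face $F_k$, and every ray of $D_k$ belonging to $D^0$ is a face of $F_k$ (using that the tight subdiagram $D^0$ is closed under faces). Without join-closedness this union of $D^0$-faces need not be a face at all, and the positivity statement genuinely fails — this is already visible for the two rays of a two-dimensional cone.

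Now I run the induction. The base cases are the zero object, where the functional is forced to be $0$, and the rays: on a ray I may choose the value on a generator freely, taking it to be the prescribed $\chi_k$ when the ray lies in $D^0$ and a strictly positive value otherwise. For $D_k$ of rank at least two, its proper faces span $D_k^\gp\otimes\QQ$, so once the already-constructed functionals on those faces are shown to glue to a single functional on the subgroup they generate, there is a unique $\tilde\chi_k$ on $D_k^\gp\otimes\QQ$ restricting to all of them. For the positivity clause I note that every element of $D_k$ is a non-negative combination of rays; since each ray value is $\ge 0$ we get $\tilde\chi_k\ge 0$ on $D_k$, and if an element does not lie in $F_k$ then its expression must involve a ray that is not a face of $F_k$, hence a ray not in $D^0$, on which the value was chosen strictly positive — so $\tilde\chi_k$ is strictly positive on $D_k\setminus F_k$, which by the previous paragraph is exactly the part of $D_k$ away from $D^0$. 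As $F_k=D_k$ precisely when $D_k\in D^0$, the family restricts to the original $\chi$ on $D^0$. For the bare extension one simply drops the positivity requirement and makes the free choices arbitrarily.

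The step I expect to be the main obstacle is the consistency of this gluing. The functional being built is pinned down by its values on the rays of $D$, and at a non-simplicial cone $D_k$ these values must respect every linear relation among the rays of $D_k$ (such relations being absent only in the simplicial case). A relation supported on rays lying in $D^0$ lives inside the join of those rays, which is again an object of $D^0$ by Definition \ref{def:join-closed}; since $\chi$ is a genuine functional on $\colim(D^0)^\gp$ it respects that relation, so no inconsistency arises among the prescribed $D^0$-values. The delicate point is that the strictly positive values I wish to assign to the rays outside $D^0$ must simultaneously respect all relations in which they occur, across every cone in which they meet; producing a single such assignment — positive on every ray off $D^0$ and agreeing with $\chi$ on the rays in $D^0$ — is exactly where join-closedness must be exploited in full, and is the technical heart of the argument.
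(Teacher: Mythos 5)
Your proposal takes a bottom-up route (induct on the rank of the cones, prescribe values on rays, then glue upward), whereas the paper inducts on the size of $D\setminus D^0$, adjoining one \emph{maximal} cone $D_b\notin D^0$ at a time: join-closedness gives a unique maximal face $D_m$ of $D_b$ lying in $D^0$, and the whole step reduces to extending $\chi|_{D_m}$ along the single split inclusion $D_m^\gp\hookrightarrow D_b^\gp$ via Remark \ref{rmk:linear-functionals-from-faces}, positively away from $D_m$. That top-down reduction is precisely what makes the paper's proof two lines long, and it is what your argument is missing.

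The gap is the one you flag yourself in your final paragraph, and it is genuine rather than a routine verification. For a non-simplicial cone $D_k$ the functional is \emph{not} freely determined by independent choices on the rays: the rays satisfy linear relations in $D_k^\gp$ (e.g.\ $r_1+r_4=r_2+r_3$ for the cone over a square), and these relations are not visible on any proper face, so compatibility of your face functionals on pairwise intersections does not imply they glue to a functional on $D_k^\gp\otimes\QQ$. You must therefore choose the strictly positive values on rays outside $D^0$ so that \emph{every} relation in \emph{every} cone containing them is respected simultaneously, and you give no construction doing this; "join-closedness must be exploited in full" is a placeholder, not an argument. (Join-closedness by itself does not obviously solve this at the ray level -- the natural fix is to abandon the ray-by-ray prescription and extend whole functionals cone by cone from the top down, i.e.\ to fall back on the paper's induction.) Until that step is supplied, the proof is incomplete at what you correctly identify as its technical heart.
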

\begin{remark}
 By the universal property of a colimit, a linear functional on a colimit of groups is equivalent to a compatible collection of linear functionals on the groups in the diagram.
\end{remark}
\begin{proof}
 We induct on the size of $D\setminus D^0$. If it is empty, the result is clear. Otherwise, let $D_b$ be a maximal object of $D$ which is not in $D^0$. Let $D^1$ be the subdiagram of $D$ consisting of $D^0$ and all the faces of $D_b$ (including $D_b$ itself). Since $D_b$ is maximal, $D^1$ is a join-closed subdiagram of $D$. It suffices to extend the linear functional to $\colim(D^1)^\gp$.

 Since $D^0$ is join-closed, there is a maximum object $D_m$ of $D^0$ which is a face of $D_b$. We may extend $\chi|_{D_m}$ to a linear functional on $D_b$ as in Remark \ref{rmk:linear-functionals-from-faces}. If $\chi|_{D_m}$ is non-negative, we may choose the extension to be positive away from $D_m$.
\end{proof}

\begin{corollary}\label{cor:limits-of-toric-monoids}
 Let $D$ be a tight diagram of toric monoids with colimit $M$. Then for every object $D_i$ of $D$, $D_i\to M$ is an inclusion of a face.
\end{corollary}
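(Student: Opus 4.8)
The plan is to exhibit the image of each $D_i$ as the zero locus of a non-negative linear functional on $M^\gp$, manufacturing that functional from Lemma \ref{lem:linear-functionals-from-join-closed}. Fix an object $D_i$ and let $D^0$ be the subdiagram consisting of $D_i$ together with all of its faces. First I would verify that $D^0$ is a join-closed tight subdiagram of $D$: it is tight because faces of faces of $D_i$ are again faces of $D_i$ (condition 2) and two faces $\tau,\tau'$ of $D_i$ have $\tau\cap\tau'$ as their unique maximal common face (condition 4); it is join-closed because if $\tau,\tau'\in D^0$ are also both faces of some $D_k$ in $D$, then by condition 4 they lie in the unique maximal common face $D_c$ of $D_i$ and $D_k$, and the smallest face of $D_k$ containing them coincides with their join inside $D_c\subseteq D_i$, hence lies in $D^0$. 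Since $D_i$ is the maximum object of $D^0$, the colimit of $D^0$ is $D_i$ itself, so $\colim(D^0)^\gp = D_i^\gp$.

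For injectivity of $D_i\to M$, recall that $M\subseteq L:=\colim(D^\gp)=M^\gp$, so it suffices to show $D_i^\gp\to L$ is injective. Given $0\ne v\in D_i^\gp$, choose a linear functional $\chi$ on $D_i^\gp=\colim(D^0)^\gp$ with $\chi(v)\ne 0$, and use Lemma \ref{lem:linear-functionals-from-join-closed} to extend it to a functional $\tilde\chi$ on $L$ with $\tilde\chi|_{D_i^\gp}=\chi$. If $v$ mapped to $0$ in $L$, then $\chi(v)=\tilde\chi(0)=0$, a contradiction; hence $D_i^\gp\to L$ is injective, and therefore so is $D_i\to M$.

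For the face property, apply the lemma to the \emph{zero} functional on $\colim(D^0)^\gp$, which is trivially non-negative on every object of $D^0$. This produces a linear functional $\tilde\chi$ on $L=M^\gp$ that is non-negative on every object of $D$, vanishes on $D_i$, and is strictly positive away from $D^0$. I claim $\im(D_i)=\{m\in M:\tilde\chi(m)=0\}$. The inclusion $\subseteq$ is immediate since $\tilde\chi|_{D_i}=0$. For $\supseteq$, write $m\in M$ as a sum $\sum_j\bar d_j$ of images of elements $d_j\in D_j$; non-negativity of $\tilde\chi$ on each $D_j$ turns $\tilde\chi(m)=\sum_j\tilde\chi(\bar d_j)=0$ into $\tilde\chi(\bar d_j)=0$ for every $j$. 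Strict positivity away from $D^0$ then forces each such $d_j$ into the maximal face of $D_j$ lying in $D^0$, which (being an object of $D^0$) is a face of $D_i$; commutativity of the diagram gives $\bar d_j\in\im(D_i)$, so $m\in\im(D_i)$.

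Finally, the zero locus of a non-negative functional is automatically a face in the sense of Definition \ref{def:face-of-monoid}: if $a+b$ lies in it, then $\tilde\chi(a)+\tilde\chi(b)=0$ with both summands $\ge 0$, so $\tilde\chi(a)=\tilde\chi(b)=0$. Thus $\im(D_i)$ is a face of $M$, and combined with injectivity, $D_i\to M$ is the inclusion of a face. The step I expect to be the main obstacle is the reverse inclusion in the face claim: correctly leveraging ``strictly positive away from $D^0$'' to push each summand $d_j$ into a common face of $D_j$ and $D_i$, which hinges on $D^0$ being exactly the faces of $D_i$ and on the compatibility of the diagram.
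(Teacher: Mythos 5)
Your proposal is correct and follows essentially the same route as the paper: take $D^0$ to be $D_i$ together with its faces (so $\colim(D^0)^\gp=D_i^\gp$), use Lemma \ref{lem:linear-functionals-from-join-closed} to extend arbitrary functionals to get injectivity of $D_i^\gp\to M^\gp$, and extend the zero functional to get a non-negative functional on $M$ vanishing exactly on $D_i$, whose zero locus is a face. You simply spell out details the paper leaves implicit (join-closedness of $D^0$ and the reverse inclusion in the face claim via ``strictly positive away from $D^0$''), and these details are handled correctly.
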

\begin{proof}
 To show that $D_i\to M$ is an inclusion, it suffices to show that $D_i^\gp \to M^\gp$ is an inclusion, for which it suffices to show that the dual map is surjective. The subdiagram consisting of all the faces of $D_i$ is join-closed, so every linear functional on $D_i^\gp$ can be extended to a linear functional on $M^\gp$ by Lemma \ref{lem:linear-functionals-from-join-closed}, so the dual map is surjective.

 To show that $D_i$ is a face, it suffices to find a linear functional on $M^\gp$ which is non-negative on $M$ and vanishes exactly on $D_i$. Such a linear functional exists by Lemma \ref{lem:linear-functionals-from-join-closed}.
\end{proof}

\subsection{Constructing Toric Stacks Locally}

We saw in \cite[\S 5]{toricartin1} that every toric stack is a good moduli space of a canonical smooth toric stack. In this subsection, we show that we can construct a toric stack by starting with a smooth toric stack and specifying compatible good moduli space maps from an open cover. In other words, given a canonical stack morphism from a smooth toric stack, the property of being a toric stack can be checked locally. This result will be important in the proof of Theorem \ref{thm:main}.

\begin{theorem}\label{thm:local-gms-data}
 Let $\X$ be a stack with an action of a torus $T$ and a dense open $T$-orbit which is $T$-equivariantly isomorphic to $T$. Let $\Y\to \X$ be a morphism from a toric stack. Suppose $\X$ has a cover by $T$-invariant open substacks $\X_i$ which are toric stacks with torus $T$, and that the maps $\Y\times_\X \X_i\to \X_i$ are canonical stack morphisms (see \cite[Definition 5.1]{toricartin1}). Then $\X$ is a toric stack.
\end{theorem}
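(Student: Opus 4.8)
The plan is to use $\Y$ as a single global smooth model for $\X$ and to read off a stacky fan for $\X$ from that of $\Y$, with Corollary \ref{cor:limits-of-toric-monoids} guaranteeing that the resulting combinatorial data is an honest fan. First I would upgrade the local hypotheses to global statements. A canonical stack morphism is a good moduli space morphism whose source is a \emph{smooth} toric stack (see \cite[\S 5]{toricartin1}), so each $\Y\times_\X\X_i\to\X_i$ is a good moduli space morphism with smooth source. Since the $\Y\times_\X\X_i$ form an open cover of $\Y$, the stack $\Y$ is smooth; and since being a good moduli space morphism is local on the base, the assembled morphism $\Y\to\X$ is itself a good moduli space morphism. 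Thus $\X$ is presented as the good moduli space of the smooth toric stack $\Y$, compatibly with the cover $\{\X_i\}$. Because $\Y$ is toric with torus $T$, it carries a stacky fan $(\Sigma,\beta\colon L\to N)$ with $N$ the cocharacter lattice of $T$, and smoothness of $\Y$ makes $\Sigma$ a smooth fan on $L$.

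Next I would describe this good moduli space combinatorially. Each open $\Y\times_\X\X_i$ is cut out by a subfan $\Sigma_i\subseteq\Sigma$, and the canonical stack morphism onto $\X_i$ exhibits $\X_i$ as the good moduli space of $(\Sigma_i,\beta)$; by the dictionary of \cite{toricartin1} the cones of $\X_i$ are the images $\beta_{\mathbb R}(\sigma)$, $\sigma\in\Sigma_i$, in $N_{\mathbb R}$ (retaining the lattice data carried by $\beta$), and these images do form a fan precisely because $\X_i$ is already known to be toric. Since the shared dense torus $T$ forces every overlap identification to restrict to the identity on $N$, the local fans so obtained agree on the overlaps $\X_i\cap\X_j$. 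Collecting the affine charts of all the $\X_i$ together with their face inclusions produces a finite diagram of toric monoids (the duals of the cones $\beta_{\mathbb R}(\sigma)$), and the cross-chart compatibility is exactly the statement that this diagram is tight in the sense of Definition \ref{def:tight-diagram}. Corollary \ref{cor:limits-of-toric-monoids} then realizes the diagram by a genuine fan on a common lattice; gluing these realizations along the subfans coming from the overlaps assembles a global (possibly non-separated) stacky fan $(\overline\Sigma,\overline\beta)$ over $N$. Finally, the toric stack of $(\overline\Sigma,\overline\beta)$ restricts to $\X_i$ over each chart and the restrictions match on overlaps, so it agrees with $\X$ after pulling back along $\{\X_i\}$, and descent for stacks yields an isomorphism with $\X$, proving $\X$ is toric.

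I expect the main obstacle to be the gluing step: one must verify that the globally collected cones and face maps really do form a tight diagram — that cones arising from different charts meet along common faces and that any two charts possess a unique maximal common face — and that the lattice and the map $\overline\beta$ glue consistently. This is exactly where having the single global model $\Y$, rather than a mere unrelated collection of local stacky fans, is essential: $\Y$ supplies one ambient lattice $L$ and rigidifies all of the overlap identifications, reducing a potential cocycle problem to the verification of tightness. A secondary subtlety is that $\X$ need not be separated, so the output is in general a gluing of fans rather than a single fan obtained from one cone, and one must confirm that the good moduli space construction and the realization of Corollary \ref{cor:limits-of-toric-monoids} are compatible with this gluing.
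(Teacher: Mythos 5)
Your proposal has the same overall shape as the paper's proof---assemble the charts into a diagram of toric monoids, verify tightness, and invoke Corollary \ref{cor:limits-of-toric-monoids}---but the step you defer as ``the main obstacle'' is precisely the content of the argument, and you do not supply it. Condition (4) of Definition \ref{def:tight-diagram} (any two charts have a unique maximal common face in the diagram) does not follow formally from having the single ambient model $\Y$; it requires an actual argument, and this is the only place $\Y$ is genuinely used. The paper's route: first refine the cover so that each $\X_i$ is $\X_{\sigma_i,\beta_i\colon L_i\to N}$ for a \emph{single} cone $\sigma_i$ and so that the cover is closed under passing to faces. Then each $\Y\times_\X\X_i$ is a cohomologically affine torus-invariant open substack of the toric stack $\Y$, so the intersection of any two of them is again cohomologically affine; by \cite[Remark 5.4]{toricartin1} this intersection is the canonical stack over $\X_i\cap\X_j$, and cohomological affineness descends along the resulting toric surjection by \cite[Lemma B.7]{toricartin1}. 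Hence $\X_i\cap\X_j$ is cohomologically affine, so it is one of the charts of the (refined) diagram, which is exactly the unique maximal common face condition. Without some version of this, your ``cross-chart compatibility is exactly tightness'' is an assertion, not a proof.

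Two further points where your combinatorial bookkeeping goes astray. First, the dictionary you propose---that the cones of $\X_i$ are the images $\beta_{\mathbb R}(\sigma)$ in $N_{\mathbb R}$---is not how the good moduli space of a canonical stack is described in this framework: the stacky fan of $\X_i$ is $(\sigma_i,\beta_i\colon L_i\to N)$ with $\sigma_i$ living on its \emph{own} lattice $L_i$, and the diagram fed to Corollary \ref{cor:limits-of-toric-monoids} consists of the monoids $\sigma_i\cap L_i$ (these are well defined by \cite[Lemma B.16]{toricartin1}); taking images in $N$ would discard the stacky structure (e.g.\ for $[\AA^1/\mu_2]$, where $\beta$ is multiplication by $2$ on $\ZZ$). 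Second, your closing worry about obtaining ``a gluing of fans rather than a single fan obtained from one cone,'' and the appeal to descent, are unnecessary: Corollary \ref{cor:limits-of-toric-monoids} shows the colimit of the tight diagram is a single toric monoid $\sigma\cap L$ with every $\sigma_i$ a face of $\sigma$, so by \cite[Proposition B.21]{toricartin1} the entire diagram of open immersions is realized inside the one cohomologically affine toric stack $\X_{\sigma,\beta}$, and $\X$ is simply the union of the corresponding torus-invariant open substacks (a subfan of the fan of faces of $\sigma$). Non-separatedness is accommodated automatically by this subfan description; no cocycle or descent argument is needed.
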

\begin{proof}
 Let $N=\hom_\gp(\GG_m,T)$ be the lattice of 1-parameter subgroups of $T$. Refining the cover, we may assume each $\X_i$ is of the form $\X_{\sigma_i,\beta_i\colon L_i\to N}$ with $\sigma_i$ a single cone. Moreover, we may assume that if $\X_{\sigma_i,\beta_i}$ is in the open cover, then the open substacks corresponding to the faces of $\sigma_i$ are as well. Then $\X$ is the colimit of this diagram of open immersions of toric stacks.

 The diagram of open immersions of the $\X_{\sigma_i,\beta_i}$ induces a diagram $D$ of toric monoids $\sigma_i\cap L_i$ (these monoids are well defined by \cite[Lemma B.16]{toricartin1}). We wish to show that $D$ is tight. By construction, the first three conditions of Definition \ref{def:tight-diagram} are satisfied. We need only to show that any two objects in $D$ have a unique maximal common face in $D$. For this, it suffices to show that the intersection of any two of the $\X_{\sigma_i,\beta_i}$ is cohomologically affine (and therefore corresponds to an element of the diagram). For each $\X_{\sigma_i,\beta_i}$, $\Y\times_\X\X_{\sigma_i,\beta_i}$ is a cohomologically affine toric open substack of $\Y$. Since $\Y$ is a toric stack, the intersection of two such substacks is cohomologically affine. By \cite[Remark 5.4]{toricartin1}, the intersection $(\Y\times_\X\X_{\sigma_i,\beta_i})\cap(\Y\times_\X\X_{\sigma_j,\beta_j})$ is the canonical
stack over $\X_{\sigma_i,\beta_i}\cap \X_{\sigma_j,\beta_j}$. In particular, $\X_{\sigma_i,\beta_i}\cap \X_{\sigma_j,\beta_j}$ has a toric surjection from a cohomologically affine toric stack, so it is cohomologically affine by \cite[Lemma B.7]{toricartin1}.

 The colimit of toric monoids $\sigma_i\cap L_i$ is of the form $\sigma\cap L$, where $L$ is the colimit of the $L_i$. By Corollary \ref{cor:limits-of-toric-monoids}, the $\sigma_i$ are faces of $\sigma$. By \cite[Proposition B.21]{toricartin1}, the induced morphisms $\X_{\sigma_i,\beta_i}\to \X_{\sigma,\beta}$ are the open immersions corresponding to the inclusions of the faces $\sigma_i\to \sigma$. The diagram of open immersions of the $\X_{\sigma_i,\beta_i}$ can therefore be realized as the diagram of inclusions of open substacks of $\X_{\sigma,\beta}$. Therefore, $\X$ is the union of these torus-invariant open substacks of $\X_{\sigma,\beta}$. In particular, it is toric.
\end{proof}

\begin{remark}\label{rmk:toric-stacks<coh-affine-toric-stacks}
 Note that the proof shows that $\X$ is an open substack of a \emph{cohomologically affine} toric stack. An interesting consequence is that any toric stack $\X_{\Sigma',\beta'\colon L'\to N}$ is an open substack of a cohomologically affine toric stack. Moreover, if $\Sigma'$ spans $L'$ (i.e.~$X_{\Sigma'}$ has no torus factors), then by applying the proof to the open cover of cohomologically affine torus-invariant open substacks, we see that $\X_{\Sigma',\beta'}$ is an open substack of a \emph{canonical} cohomologically affine toric stack $\X_{\sigma,\beta}$ (i.e.~one that depends only on $\X_{\Sigma',\beta'}$, not the stacky fan $(\Sigma',\beta')$). The corresponding stacky subfan of $(\sigma,\beta)$ is initial among all stacky fans which give rise to the toric stack $\X_{\Sigma',\beta'}$ (cf.~\cite[Appendix B]{toricartin1}).
\end{remark}

\section{Preliminary Technical Results}\label{sec:technical}

In this section, we gather technical results that will be used in the proofs of Theorems \ref{thm:local-structure}, \ref{thm:main-smooth}, and \ref{thm:main}.

\subsection{Some Facts About Stacks}
\begin{lemma}\label{lem:Weil=Cartier}
 Let $\Z$ be an irreducible Weil divisor (i.e.~a reduced irreducible closed substack of codimension $1$) of a stack $\X$. Suppose $U\to \X$ is a smooth cover. Then $\Z$ is a Cartier divisor of $\X$ if and only if $\Z\times_\X U$ is a Cartier divisor of $U$. In particular, on any smooth stack, every Weil divisor is Cartier.
\end{lemma}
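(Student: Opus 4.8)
The plan is to reinterpret the Cartier condition as invertibility of an ideal sheaf and then to observe that invertibility can be tested after faithfully flat base change, which handles both implications at once. First I would recall that, by definition, $\Z$ is a Cartier divisor of $\X$ exactly when its ideal sheaf $\I_\Z\subset\O_\X$ is an invertible $\O_\X$-module. Writing $f\colon U\to\X$ for the smooth cover, the essential geometric input is that $f$ is flat, so pulling back the short exact sequence $0\to\I_\Z\to\O_\X\to\O_\Z\to 0$ keeps it exact. This identifies $f^*\I_\Z$ with the ideal sheaf of the pullback $\Z\times_\X U$ in $U$, that is, $\I_{\Z\times_\X U}=\I_\Z|_U$. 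With this identification in hand, the lemma becomes the single statement that $\I_\Z$ is invertible if and only if $\I_\Z|_U$ is invertible.

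Next I would dispatch the two directions using this reformulation. The forward direction is immediate, since pullback of an invertible sheaf is invertible. For the converse I would invoke faithfully flat descent for the property of being locally free of a given rank: a finitely presented quasi-coherent sheaf is invertible precisely when its pullback along a faithfully flat, finitely presented morphism is. A smooth cover is surjective, flat, and of finite presentation, hence fppf, so the invertibility of $\I_\Z|_U$ forces the invertibility of $\I_\Z$. (In the stacky framework this is close to being built into the very definition of an invertible sheaf, so there is little to verify beyond naming the descent principle.)

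Finally, for the assertion that every Weil divisor on a smooth stack is Cartier, I would choose the cover $U$ to be a smooth \emph{scheme}. Since $\Z\times_\X U\to\Z$ is a base change of the smooth morphism $f$, it is smooth, and a smooth morphism over a reduced base has reduced total space; together with flatness preserving codimension, this makes $\Z\times_\X U$ a reduced, pure codimension-$1$ closed subscheme of the regular scheme $U$. I would then appeal to the classical fact that regular local rings are unique factorization domains, so that every reduced codimension-$1$ closed subscheme of a regular scheme is locally principal, i.e.\ Cartier. The equivalence just established then shows $\Z$ is Cartier on $\X$.

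The only step requiring genuine care is the base-change identity $\I_{\Z\times_\X U}=\I_\Z|_U$, which rests squarely on the flatness of $f$; once that is in place, the remaining ingredients—descent of the locally free property and the factoriality of regular local rings—are entirely standard, so I do not expect any substantial obstacle.
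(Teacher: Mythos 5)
Your proof is correct and follows essentially the same route as the paper's: reformulate the Cartier condition as invertibility of the ideal sheaf, identify $\I_{\Z\times_\X U}$ with $f^*\I_\Z$ using flatness of the smooth cover, and conclude by descent of local freeness along smooth (fppf) covers. The only difference is that you spell out the ``in particular'' clause (reducedness and codimension of $\Z\times_\X U$, factoriality of regular local rings), which the paper leaves implicit.
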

\begin{proof}
 If $\I$ is the ideal sheaf of the Weil divisor $\Z$, then $\Z$ is Cartier if and only if $\I$ is a line bundle. One may verify that a quasi-coherent sheaf is locally free of a given rank locally in the smooth topology \cite[Theorem 11.4]{milne}. Since smooth morphisms are flat, the pullback to $U$ of ideal sheaf $\I$ is the ideal sheaf of the fiber product $\Z\times_\X U$.
\end{proof}

\begin{proposition}\label{prop:finite-index}
 Suppose $f\colon\X\to \Y$ is a quasi-compact representable \'etale morphism of quasi-separated algebraic stacks. Then $f$ induces finite-index inclusions on stabilizers of geometric points.
\end{proposition}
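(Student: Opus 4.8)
The plan is to reduce the statement to a computation with the automorphism group schemes of the two points together with the geometric fiber of $f$. Fix a geometric point $x\colon\spec k\to\X$ with image $y=f\circ x$, and write $G_x=\aut_\X(x)$ and $G_y=\aut_\Y(y)$ for the stabilizer group spaces, with $\phi\colon G_x\to G_y$ the homomorphism induced by $f$. The inclusion part is immediate: a morphism of algebraic stacks is representable if and only if it induces injections on automorphism groups of geometric points, so representability of $f$ gives that $\phi$ is a monomorphism. It remains to show that the image $\phi(G_x)$ has finite index in $G_y$.

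First I would analyze the geometric fiber $F=\X\times_\Y\spec k$, formed using $y$. Since $f$ is representable, $F$ is an algebraic space, and since $f$ is \'etale, $F\to\spec k$ is \'etale; as $k$ is algebraically closed, $F$ is therefore a disjoint union of copies of $\spec k$. In particular every point of $F$ is open and closed. The group $G_y$ acts on $F$ over $\spec k$ by post-composing the gluing isomorphism: on points, $g\cdot(a,\alpha)=(a,g\circ\alpha)$, where a point of $F$ is a pair consisting of $a\in\X$ together with an isomorphism $\alpha\colon f(a)\xrightarrow{\sim}y$. The point $x$ determines a distinguished point $p=(x,\id)\in F(k)$.

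The key computation is the stabilizer of $p$. Unwinding the description of $F$ as isomorphism classes of such pairs (with trivial automorphisms, since $F$ is an algebraic space), one checks that $g\cdot p=p$ holds exactly when there is $\psi\in G_x$ with $f(\psi)=g$; that is, $\stab_{G_y}(p)=\phi(G_x)$. Hence the orbit map $g\mapsto g\cdot p$ identifies the coset space $G_y/\phi(G_x)$ with the $G_y$-orbit of $p$ inside $F$. Because points of $F$ are open, the preimage of $p$ under the orbit map---namely $\phi(G_x)$---is open in $G_y$, so $\phi(G_x)$ is an open subgroup and its cosets form a pairwise disjoint open cover of $G_y$.

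Finally I would invoke quasi-separatedness to finish. The diagonal of $\Y$ is quasi-compact, so the inertia $I_\Y\to\Y$, being a base change of the diagonal, is quasi-compact; pulling back along $y\colon\spec k\to\Y$ shows that $G_y$ is quasi-compact. A pairwise disjoint open cover of a quasi-compact space must be finite, so $\phi(G_x)$ has finitely many cosets in $G_y$, giving the finite-index inclusion $\phi\colon G_x\hookrightarrow G_y$. The main obstacle I anticipate is setting up the $G_y$-action on $F$ and the stabilizer identification $\stab_{G_y}(p)=\phi(G_x)$ cleanly, together with pinpointing exactly where each hypothesis enters: the \'etale hypothesis is used only to make $F$ discrete (which is what turns ``finite index'' into ``finite orbit in a discrete set''), while quasi-separatedness, via quasi-compactness of the inertia, is precisely what bounds the number of cosets.
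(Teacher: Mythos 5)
Your proof is correct and follows essentially the same route as the paper's: both identify the index with the size of a $G_y$-orbit in the geometric fiber $\spec k\times_\Y\X$, which is a disjoint union of copies of $\spec k$ by \'etaleness, and both use quasi-separatedness to force finiteness. The only real difference is presentational --- the paper packages the action via the residual gerbe $BG\to\Y$ and the torsor $U=\spec k\times_\Y\X\to BG\times_\Y\X$ and derives finiteness from quasi-compactness of $U$, whereas you act directly on the fiber and get finiteness from quasi-compactness of $G_y$ (via the quasi-compact diagonal), which is if anything slightly more careful since it never needs the fiber itself to be quasi-compact.
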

\begin{proof}
 Since $f$ is representable, it is faithful \cite[Proposition 2.4.1.3 with Corollary 8.1.2]{lmb}, so the induced maps on stabilizers are inclusions. Suppose that $x\colon \spec K\to \X$ is a geometric point, and let $G$ be the stabilizer of $f(x)$. The residual gerbe of $\Y$ at $f(x)$ must be trivial since $K$ is separably closed, so we have a stabilizer-preserving morphism \cite[Definition 2.1]{Alper:locquot} $BG\to \Y$ through which $f(x)$ factors. Since stabilizer-preserving morphisms are stable under base change, it suffices to show that the morphism $BG\times_\Y\X\to BG$ induces finite-index inclusions on stabilizers. Base changing along the $G$-torsor $\spec K\to BG$, we get a quasi-compact \'etale cover $U$ of $\spec K$, which must be a finite disjoint union of copies of $\spec K$:
 \[\xymatrix{
  U\ar[r]\ar[d]& BG\times_\Y\X\ar[r]\ar[d]^{\text{\'et, rep}} & \X\ar[d]^f\\
  \spec K\ar[r] & BG\ar[r] & \Y
 }\]
 We have that $U$ is a $G$-torsor over $BG\times_\Y\X$. If $H\subseteq G$ is the stabilizer of a point of $U$, then the orbit is isomorphic to $G/H$. Since $U$ is finite, any such $G/H$ must be finite, so $H$ must have finite index inside of $G$. The stabilizers at points of $BG\times_\Y\X$ are precisely such $H$.
\end{proof}

\begin{lemma}\label{lem:quot-has-affine-diagonal}
 Suppose $\X$ is an algebraic stack with affine diagonal. Suppose $G$ is an affine algebraic group with an action on $\X$. Then $[\X/G]$ has affine diagonal.
\end{lemma}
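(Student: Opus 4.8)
The plan is to verify that the diagonal $\Delta\colon [\X/G]\to[\X/G]\times[\X/G]$ is affine by base-changing along a convenient smooth surjective cover, using that affineness of a morphism is fppf-local on the target. The projection $p\colon\X\to[\X/G]$ is a $G$-torsor, hence a smooth surjection (an fppf cover) which is affine because $G$ is affine; consequently $p\times p\colon\X\times\X\to[\X/G]\times[\X/G]$ is again a smooth surjective cover. Since $\Delta$ is representable (diagonals of algebraic stacks always are), it therefore suffices to show that the base change of $\Delta$ along $p\times p$ is an affine morphism, and affineness of $\Delta$ will then follow by descent.

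The first step is to identify this base change. Unwinding the groupoid presentation $\X\times_{[\X/G]}\X\cong G\times\X$, one computes the fiber product
\[
 (\X\times\X)\times_{[\X/G]\times[\X/G]}[\X/G]\;\cong\;\X\times_{[\X/G]}\X\;\cong\;G\times\X,
\]
with the induced map to $\X\times\X$ becoming $\phi\colon G\times\X\to\X\times\X$, $(g,x)\mapsto(gx,x)$, i.e.\ the action paired with the projection. The second step is to exhibit $\phi$ as a composite of affine morphisms: I would write $\phi=(a\times\id_\X)\circ(\id_G\times\Delta_\X)$, where $a\colon G\times\X\to\X$ is the action and $\Delta_\X$ is the (affine, by hypothesis) diagonal of $\X$. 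Here $\id_G\times\Delta_\X\colon G\times\X\to G\times\X\times\X$ is the base change of $\Delta_\X$ along a projection, hence affine; and $a\times\id_\X$ is affine once $a$ is, which holds because the shear isomorphism $(g,x)\mapsto(g,gx)$ of $G\times\X$ carries $a$ to the projection $G\times\X\to\X$, and that projection is affine since $G$ is affine. As base changes, products, and composites of affine morphisms are affine, $\phi$ is affine, and the lemma follows.

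The genuinely careful point—the main obstacle, such as it is—is the fiber-product identification: one must unwind the presentation of $[\X/G]$ correctly to see that the base change of $\Delta$ along $p\times p$ really is the map $\phi$, and confirm that $p\times p$ is a cover along which affineness descends. The remaining ingredients (the factorization of $\phi$ and the stability of affine morphisms under base change, products, and composition) are routine bookkeeping.
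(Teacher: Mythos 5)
Your proof is correct and takes essentially the same route as the paper: both base-change the diagonal along the smooth cover $\X\times\X\to[\X/G]\times[\X/G]$, identify the resulting morphism with the action-cross-projection map $G\times\X\to\X\times\X$, and factor that map as a composite of affine morphisms using the affineness of $G$ and of $\Delta_\X$. The only (immaterial) difference is the choice of factorization --- the paper writes the map as $\Delta_{G\times\X}$ followed by $\alpha\times p_2$, whereas you use $\id_G\times\Delta_\X$ followed by $a\times\id_\X$.
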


\begin{proof}
 The following diagram is cartesian:
 \[\xymatrix@C+1pc{
  \llap{$G\times \X\cong\;$} \X\times_{[\X/G]}\X \ar[r]^-\delta\ar[d] & \X\times \X \ar[d]\\
  [\X/G]\ar[r]^-{\Delta_{[\X/G]}} & [\X/G]\times [\X/G]
 }\]
 Since $\X\times \X\to [\X/G]\times[\X/G]$ is a smooth cover, it suffices to verify that the action morphism $\delta\colon G\times \X\to \X\times \X$ is affine.

 Composing $\delta$ with the projections $\X\times\X\to \X$ gives the projection and action maps $p_2,\alpha\colon G\times \X\to \X$. The projection $p_2$ is affine because $G$ is affine, and $\alpha$ is isomorphic to $p_2$, so it is also affine. We have that $\delta$ is then the composition $G\times \X\xrightarrow\Delta (G\times \X)\times (G\times \X)\xrightarrow{\alpha\times p_2} \X\times \X$. Since $\alpha\times p_2$ is a product of affine maps, it is affine. Since $G$ is affine, it has affine diagonal. By assumption, $\X$ also has affine diagonal, so $G\times \X$ has affine diagonal. So $\delta$ is a composition of affine morphisms.
\end{proof}

\begin{lemma}\label{lem:canonical-stack-affine-diagonal}
 If $\X$ has affine diagonal and $\Y\to \X$ is a canonical stack morphism (in the sense of \cite[Definition 5.6]{toricartin1}), then $\Y$ has affine diagonal.
\end{lemma}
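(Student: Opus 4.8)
The plan is to deduce affineness of the absolute diagonal $\Delta_\Y\colon\Y\to\Y\times\Y$ from affineness of the relative diagonal $\Delta_{\Y/\X}\colon\Y\to\Y\times_\X\Y$, using the hypothesis on $\X$ only through the latter. Since $\Y\times_\X\Y=(\Y\times\Y)\times_{\X\times\X}\X$, the absolute diagonal factors as
\[
\Y \xrightarrow{\ \Delta_{\Y/\X}\ }\ \Y\times_\X\Y\ \xrightarrow{\ c\ }\ \Y\times\Y,
\]
where $c$ is the base change of $\Delta_\X\colon\X\to\X\times\X$ along $\Y\times\Y\to\X\times\X$. As $\X$ has affine diagonal and affine morphisms are stable under base change, $c$ is affine; as affine morphisms are stable under composition, it therefore suffices to show that $\Delta_{\Y/\X}$ is affine.

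Affineness of $\Delta_{\Y/\X}$ can be checked after base change along a smooth cover of $\X$, since such a base change of $\Delta_{\Y/\X}$ is again a relative diagonal. I would therefore reduce, using the local description of a canonical stack morphism (\cite[Definition 5.6]{toricartin1}; recall from \cite[\S5]{toricartin1} that a toric stack is the good moduli space of its canonical stack), to the model case in which $\X=\spec A^G$ is an affine scheme and $\Y=[\spec A/G]\to\spec A^G$ is a good moduli space morphism with $G$ linearly reductive, hence affine. In this situation $\spec A$ is affine and so has affine diagonal, whence Lemma \ref{lem:quot-has-affine-diagonal} shows that $\Y=[\spec A/G]$ has affine \emph{absolute} diagonal.

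To pass from the absolute to the relative diagonal in this model, I would note that $\spec A^G$ is a separated scheme, so $\Delta_{\spec A^G}$ is a closed immersion and its base change
\[
c'\colon [\spec A/G]\times_{\spec A^G}[\spec A/G]\to [\spec A/G]\times[\spec A/G]
\]
is a monomorphism. Since $\Delta_{[\spec A/G]}=c'\circ\Delta_{[\spec A/G]/\spec A^G}$ is affine and $c'$ is a monomorphism, the cancellation law (if $g\circ h$ is affine and $g$ is a monomorphism then $h$ is affine, as one sees by factoring $h$ through its graph, which is an isomorphism when $g$ is a monomorphism) forces $\Delta_{[\spec A/G]/\spec A^G}$ to be affine. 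Combining this with the first paragraph gives that $\Delta_\Y$ is affine, i.e.\ $\Y$ has affine diagonal.

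The main obstacle is the reduction step: extracting from \cite[Definition 5.6]{toricartin1} the smooth-local quotient presentation $[\spec A/G]\to\spec A^G$ of a canonical stack morphism with $G$ affine (equivalently, that such morphisms are smooth-locally good moduli space morphisms with linearly reductive stabilizers), and checking that the local base may be taken separated so that $c'$ is a monomorphism. Everything else is formal: the factorizations of the diagonal, the stability of affine morphisms under base change and composition, and the cancellation law. If the local model is instead of the form $[\spec A/G]\to[\spec A^G/\Gamma]$, the same argument applies after a further base change along the smooth cover $\spec A^G\to[\spec A^G/\Gamma]$, using once more that affineness of the relative diagonal is local on $\X$.
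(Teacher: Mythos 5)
Your proposal is correct and follows essentially the same route as the paper: factor $\Delta_\Y$ through $\Delta_{\Y/\X}$ and a base change of the affine $\Delta_\X$, reduce affineness of $\Delta_{\Y/\X}$ smooth-locally on $\X$ to the quotient presentation of the canonical stack, apply Lemma \ref{lem:quot-has-affine-diagonal} to get the absolute diagonal of the local model affine, and cancel. The only (immaterial) difference is in the cancellation step: the paper cancels using that $\Y\times_\X\Y\to\Y\times\Y$ is affine, whereas you use that it is a monomorphism because the local base is a separated scheme; both are valid, and your local model $[\spec A/G]\to\spec A^G$ matches the paper's $[X_{\ttilde\Sigma}/G_\Phi]\to X_\Sigma$ after refining to single cones.
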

\begin{proof}
 Consider the following diagram, in which the square is cartesian:
 \[\xymatrix{
  \Y\ar[r]_-{\Delta_{\Y/\X}}\ar@(ur,ul)[rr]^{\Delta_\Y} & \Y\times_\X\Y\ar[r]\ar[d] & \Y\times \Y\ar[d]\\
  & \X\ar[r]^-{\Delta_\X} & \X\times \X
 }\]
 Since $\Delta_\Y$ is a composition of $\Delta_{\Y/\X}$ and a pullback of $\Delta_\X$ (which is assumed to be affine), it suffices to show that $\Delta_{\Y/\X}$ is affine.

 Affineness can be verified locally on the base in the smooth topology, so we may assume $\Y=[X_{\ttilde \Sigma}/G_\Phi]$ and $\X=X_\Sigma$ (see \cite[Remark 5.2]{toricartin1}). In this case, $\Y$ has affine diagonal by Lemma \ref{lem:quot-has-affine-diagonal}, so in the above diagram $\Y$ and $\Y\times_\X\Y$ are both affine over $\Y\times \Y$, so $\Delta_{\Y/\X}$ is affine.
\end{proof}

\begin{lemma}
\label{l:affine-diag-gms}
If $\X\to\Y$ is a good moduli space morphism and $\X$ has affine diagonal, then $\Y$ has affine diagonal.
\end{lemma}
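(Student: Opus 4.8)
The plan is to prove the stronger statement that the diagonal $\Delta_\Y\colon \Y\to\Y\times\Y$ is \emph{cohomologically affine}, and then to upgrade this to affineness by Serre's criterion. Write $\pi\colon\X\to\Y$ for the good moduli space morphism. The engine driving the whole argument is the tautological identity
\[
 (\pi\times\pi)\circ\Delta_\X \;=\; \Delta_\Y\circ\pi
\]
of morphisms $\X\to\Y\times\Y$, combined with two standard features of good moduli spaces: the pushforward $\pi_*$ is exact on quasi-coherent sheaves and satisfies $\pi_*\pi^*\mathcal F\cong\mathcal F$ (equivalently $\pi_*\O_\X=\O_\Y$), and the product $\pi\times\pi\colon\X\times\X\to\Y\times\Y$ is again a good moduli space morphism, in particular cohomologically affine. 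Both facts are part of Alper's theory of good moduli spaces.

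The computation I would carry out is as follows. Fix a quasi-coherent sheaf $\mathcal F$ on $\Y$. Since $\pi$ is cohomologically affine and $\pi_*\pi^*\mathcal F\cong\mathcal F$, we have $R\pi_*\pi^*\mathcal F=\mathcal F$, so that
\[
 R\Delta_{\Y*}\mathcal F \;=\; R\Delta_{\Y*}R\pi_*\pi^*\mathcal F \;=\; R(\Delta_\Y\circ\pi)_*\pi^*\mathcal F \;=\; R\big((\pi\times\pi)\circ\Delta_\X\big)_*\pi^*\mathcal F \;=\; R(\pi\times\pi)_*\,R\Delta_{\X*}\,\pi^*\mathcal F .
\]
Now $\Delta_\X$ is affine, this being exactly the hypothesis that $\X$ has affine diagonal, so $R\Delta_{\X*}\pi^*\mathcal F=\Delta_{\X*}\pi^*\mathcal F$ is a sheaf, and $\pi\times\pi$ is cohomologically affine, so applying $R(\pi\times\pi)_*$ to a sheaf again yields a sheaf. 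Hence $R\Delta_{\Y*}\mathcal F=(\pi\times\pi)_*\Delta_{\X*}\pi^*\mathcal F$ is concentrated in degree $0$; that is, $R^i\Delta_{\Y*}\mathcal F=0$ for all $i>0$. As $\mathcal F$ was arbitrary, $\Delta_\Y$ is cohomologically affine.

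To finish, I would upgrade cohomological affineness of $\Delta_\Y$ to genuine affineness. Since $\Y$ is an algebraic space, $\Delta_\Y$ is a monomorphism, hence representable (by schemes) and separated; and it is quasi-compact because good moduli spaces are quasi-separated, inheriting this from $\X$ (which has affine, and in particular quasi-compact, diagonal). A quasi-compact, separated, cohomologically affine morphism of algebraic spaces is affine by Serre's criterion, so $\Delta_\Y$ is affine and $\Y$ has affine diagonal.

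The substantive point, and the only place where the hypothesis on $\X$ is used, is the second paragraph: the cohomological affineness of $\Delta_\X$ is transported down the good moduli space morphism by pushing forward along $\pi\times\pi$, via the identity $(\pi\times\pi)\circ\Delta_\X=\Delta_\Y\circ\pi$. I expect the main obstacle to be one of bookkeeping rather than of ideas, namely justifying the derived manipulations (Leray for composites and the collapses $R\pi_*=\pi_*$, $R\Delta_{\X*}=\Delta_{\X*}$, $R(\pi\times\pi)_*=(\pi\times\pi)_*$ coming from cohomological affineness) in the setting of Artin stacks, together with confirming that $\pi\times\pi$ is cohomologically affine and that $\Y$ is quasi-separated so that Serre's criterion applies. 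Each of these ingredients is available in the good moduli space literature.
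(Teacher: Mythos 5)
Your proof is correct in substance and rests on the same two ingredients as the paper's --- the identity $(\pi\times\pi)\circ\Delta_\X=\Delta_\Y\circ\pi$ and a Serre-type upgrade from cohomological affineness to affineness --- but it runs them in the opposite order, which makes it a genuinely different (and somewhat heavier) route. The paper localizes first: it tests the diagonal on affine charts $U_1,U_2\to\Y$, identifies $\X\times_\Y(U_1\times_\Y U_2)$ with $\X\times_{\Delta_\X,\,\X\times\X}\bigl((\X\times_\Y U_1)\times(\X\times_\Y U_2)\bigr)$, deduces that this is cohomologically affine because $\Delta_\X$ is affine, and then quotes \cite[Lemma 6.9(2)]{toricartin1} (a good moduli space of a cohomologically affine stack is affine) to conclude that $U_1\times_\Y U_2$ is affine. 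You instead prove the global statement that $\Delta_\Y$ is cohomologically affine by pushing forward along the square, and localize only at the very end via Serre's criterion; this costs you the derived-category bookkeeping you flag (composition of $Rf_*$, the projection formula for $\pi$, cohomological affineness of $\pi\times\pi$), all of which is indeed available in Alper's framework. One caveat worth recording: your last step uses that $\Delta_\Y$ is a monomorphism, i.e.\ that $\Y$ is an algebraic space. This matches Alper's definition and covers the paper's application (Proposition \ref{prop:toric-alg-sp-gms}), but the paper's chart argument works verbatim for a stacky target $\Y$ (which its phrasing allows); in that generality the passage from ``$\Delta_\Y$ cohomologically affine'' to ``$\Delta_\Y$ affine'' is more delicate, because checking affineness on charts of $\Y\times\Y$ requires base-change stability of cohomological affineness, which itself wants control of the diagonal of $\Y$ --- exactly what is being proved. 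The paper's localize-first order of operations sidesteps this circularity.
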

\begin{proof}
We must show that if $U_1$ and $U_2$ are affine schemes, and we have morphisms $U_i\to\Y$, then $U_1\times_\Y U_2$ is an affine scheme. Since $\X\times_{\Y}U_i\to U_i$ is a good moduli space morphism, $\X\times_{\Y}U_i$ is cohomologically affine. Note that
\[
\X\times_\Y(U_1\times_\Y U_2)\cong \X\times_{\Delta_\X,\X\times\X}(\X\times_{\Y}U_1)\times(\X\times_{\Y}U_2),
\]
so $\X\times_\Y(U_1\times_\Y U_2)$ is cohomologically affine, as $\Delta_\X$ is affine. Since $\X\times_\Y(U_1\times_\Y U_2)\to U_1\times_\Y U_2$ is a good moduli space morphism, \cite[Lemma 6.9(2)]{toricartin1} shows that $U_1\times_\Y U_2$ is affine.
\end{proof}

\begin{lemma}\label{lem:quot-has-red-stabilizers}
 Let $\X$ be an algebraic stack over a field $k$, with reductive stabilizers at geometric points, and let $G$ be a diagonalizable group over $k$ which acts on $\X$. Then $[\X/G]$ has reductive stabilizers at geometric points.
\end{lemma}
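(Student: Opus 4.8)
The plan is to compute the automorphism group of an arbitrary geometric point of $[\X/G]$ and to realize it as an extension of a closed subgroup of $G$ by a stabilizer of $\X$; the conclusion then follows from permanence properties of linearly reductive groups. First I would fix a geometric point $p\colon\spec K\to[\X/G]$ with $K$ separably closed. A $K$-point of $[\X/G]$ is a $G$-torsor $P\to\spec K$ together with a $G$-equivariant map $P\to\X$; since $K$ is separably closed, $P$ is trivial, so $p$ lifts to a geometric point $x\colon\spec K\to\X$. Unwinding the definition of the quotient stack, an automorphism of $p$ is a pair $(h,\phi)$ consisting of an element $h\in G(K)$ (an automorphism of the trivial torsor) together with an isomorphism $\phi\colon h\cdot x\xrightarrow{\sim}x$ in $\X(K)$ witnessing that $h\cdot x$ and $x$ define the same point. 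This description is functorial, so it identifies $\aut_{[\X/G]}(p)$ with a group scheme of finite type over $K$.

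From this description I would extract a short exact sequence of affine group schemes over $K$,
\[
 1\to\aut_\X(x)\to\aut_{[\X/G]}(p)\xrightarrow{\;\pi\;}G_x\to 1,
\]
where $\pi$ records the component $h$. Its kernel consists of the pairs $(1,\phi)$ and is precisely the stabilizer $\aut_\X(x)$ (a normal subgroup, being a kernel), while its image $G_x\subseteq G$ is the scheme-theoretic stabilizer of $x$; being the image of a homomorphism of group schemes of finite type over a field, $G_x$ is a closed subgroup scheme of $G$.

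It then remains to assemble the reductivity conclusion. A closed subgroup scheme of a diagonalizable group is diagonalizable, so $G_x$ is diagonalizable, and diagonalizable group schemes are linearly reductive. By hypothesis $\aut_\X(x)$ is reductive (equivalently, in characteristic zero, linearly reductive). Since the class of linearly reductive affine group schemes is closed under extensions — if $1\to N\to H\to Q\to 1$ with $N$ and $Q$ linearly reductive, then the invariants functor factors as $(-)^{H}=(-)^{Q}\circ(-)^{N}$, a composite of exact functors, hence exact — the middle term $\aut_{[\X/G]}(p)$ is linearly reductive. As $p$ was arbitrary, $[\X/G]$ has reductive stabilizers at geometric points. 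The main obstacle is the middle step: making the identification of $\aut_{[\X/G]}(p)$ and the exactness of the displayed sequence precise at the level of group schemes rather than merely on $K$-points, in particular verifying that $\pi$ is a homomorphism with kernel exactly $\aut_\X(x)$ and with image a closed subgroup scheme. Everything after that is a formal application of the permanence of linear reductivity under extension, together with the elementary facts about diagonalizable groups.
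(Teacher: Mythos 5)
Your proof is correct and follows essentially the same route as the paper: both realize $\aut_{[\X/G]}(p)$ as an extension of a subgroup of $G$ (necessarily diagonalizable, hence reductive) by the stabilizer $\aut_\X(x)$ of a lift, and conclude by closure of (linear) reductivity under extensions. The only cosmetic difference is that the paper extracts the exact sequence from the cartesian square over $BG$ rather than by unwinding torsors directly, and it does not insist on surjectivity onto the image $G_x$, since an extension by any subgroup of $G$ suffices.
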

\begin{proof}
 Let $f\colon \spec K\to [\X/G]$ be a geometric point (i.e.~$K$ be a separably closed extension of the field $k$). Then $f$ is the image of some geometric point $\tilde f\colon \spec K\to \X$. We have the following diagram, in which the square is cartesian:
 \[\xymatrix{
   \spec K\ar[rd]_f\ar[r]^-{\tilde f} & \X\ar[r]\ar[d] & \spec k\ar[d]\\
  & [\X/G] \ar[r]^-\pi & BG
 }\]
 An automorphism $\phi$ of $f$ in $[X/G]$ induces an automorphism of $\pi\circ f$, which is a $K$-point of $G$. Since the square is cartesian, this image in $G$ is the identity if and only if $\phi$ is induced by an automorphism of $\tilde f$, so we get an exact sequence
 \[
  1\to \aut_\X(\tilde f)\to \aut_{[\X/G]}(f)\to G
 \]
 (exactness on the left follows from the fact that $\X\to [\X/G]$ is representable). So the stabilizer of the point of $[\X/G]$ is an extension of a subgroup of $G$ by the stabilizer of a pre-image in $\X$. Since $G$ is diagonalizable, any subgroup is diagonalizable, and so is reductive. An extension of reductive groups is reductive.
\end{proof}

Recall that a morphism $f\colon X\to Y$ is \emph{Stein} if $f_*\O_X=\O_Y$.

\begin{lemma}\label{lem:stein-open-immersion}
 Let $\X$ be a normal noetherian algebraic stack, and let $\U\subseteq \X$ be an open substack whose complement is of codimension at least $2$. Then the inclusion $\U\hookrightarrow \X$ is Stein.
\end{lemma}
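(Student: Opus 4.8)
The plan is to reduce to the classical statement for normal noetherian \emph{schemes} (algebraic Hartogs' lemma) by smooth descent, and then prove the scheme case via Krull's description of a normal domain as the intersection of its localizations at height-one primes.

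First I would set up the descent. Write $j\colon\U\hookrightarrow\X$ for the inclusion; we must show the unit map $\O_\X\to j_*\O_\U$ is an isomorphism. Choose a smooth cover $p\colon V\to\X$ by a scheme, and form the cartesian square in which $\U':=\U\times_\X V=p^{-1}(\U)$ is an open subscheme of $V$ with inclusion $j'\colon\U'\hookrightarrow V$. Since $p$ is faithfully flat, it suffices to check that $p^*(\O_\X\to j_*\O_\U)$ is an isomorphism. Because $\X$ is noetherian, $j$ is quasi-compact and quasi-separated, so flat base change along $p$ gives $p^*j_*\O_\U\cong j'_*\O_{\U'}$; thus the pulled-back map is the unit $\O_V\to j'_*\O_{\U'}$. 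I would then observe that $V$ inherits the hypotheses: smoothness over the normal stack $\X$ makes $V$ normal, and flatness of $p$ preserves codimension, so $V\setminus\U'=p^{-1}(\X\setminus\U)$ still has codimension at least $2$.

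It then remains to prove the scheme case: for $V$ normal noetherian and $\U'\subseteq V$ open with complement of codimension $\geq 2$, the map $\O_V\to j'_*\O_{\U'}$ is an isomorphism. This is local on $V$ and both sheaves are quasi-coherent (pushforward along the qcqs morphism $j'$ preserves quasi-coherence), so I would reduce to $V=\spec A$ with $A$ normal noetherian, and, passing to connected components, to $A$ a normal domain with fraction field $K$. For quasi-coherent sheaves on an affine scheme a map is an isomorphism iff it is so on global sections, so the claim becomes $\Gamma(\U',\O)=A$. Viewing regular functions inside $K$, a section over $\U'$ is precisely a rational function lying in $A_{\mathfrak p}$ for every $\mathfrak p\in\U'$; since the complement of $\U'$ has codimension $\geq 2$, the open $\U'$ contains all height-one primes, whence $\Gamma(\U',\O)\subseteq\bigcap_{\operatorname{ht}\mathfrak p=1}A_{\mathfrak p}=A$, the last equality being Krull's theorem for normal noetherian domains. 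The reverse inclusion $A\subseteq\Gamma(\U',\O)$ is clear, so we get equality.

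The main obstacle is the stacky bookkeeping in the reduction step: confirming that pushforward along the open immersion commutes with the smooth base change — this is exactly where the noetherian hypothesis is used, to ensure $j$ is quasi-compact and quasi-separated so that flat base change applies — and checking that both normality and the codimension bound descend to the smooth cover $V$. Once these are secured, the scheme case is the standard algebraic Hartogs argument and presents no difficulty.
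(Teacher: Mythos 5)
Your proposal is correct and follows essentially the same route as the paper: reduce to the affine scheme case by flat (smooth) base change for the pushforward along the quasi-compact open immersion, then apply the fact that a normal noetherian domain is the intersection of its localizations at height-one primes. You spell out the descent step (qcqs hypotheses, descent of normality and of the codimension bound) more explicitly than the paper, which simply cites cohomology and base change, but the argument is the same.
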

\begin{proof}
 By cohomology and base change \cite[Proposition 9.3]{hartshorne}, the property of being Stein is local on the base in the smooth topology, so we may assume $\X=\spec R$, with $R$ a normal noetherian domain. Then $\U$ is a scheme, and we must show that any regular function on $\U$ arises as an element of $R$. Any regular function on $\U$ is a rational function on $R$, so it is of the form $f/g$, with $f,g\in R$. Since the complement of $\U$ is of codimension at least 2, we see that $f/g\in R_\mathfrak p$ for any codimension 1 prime $\mathfrak p$. A noetherian normal domain is the intersection in its fraction field of its localizations at codimension 1 primes \cite[Corollary 11.4]{eisenbud}, so $f/g\in R$.
\end{proof}

\begin{corollary}\label{cor:stein-birational-morphisms}
 Let $f\colon \X\to \Y$ be a morphism of normal noetherian algebraic stacks. Suppose there is an open substack $\U\subseteq \X$ so that $f|_\U$ is an isomorphism and so that $\U\subseteq \X$ and $f(\U)\subseteq \Y$ have complements of codimension at least $2$. Then $f$ is Stein.
\end{corollary}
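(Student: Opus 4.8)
The plan is to factor $f$ through the two open immersions and the isomorphism on $\U$, and then transport the Stein property supplied by Lemma~\ref{lem:stein-open-immersion} along this factorization. Write $j\colon \U\hookrightarrow\X$ and $j'\colon f(\U)\hookrightarrow\Y$ for the open immersions and let $g\colon\U\to f(\U)$ be the isomorphism $f|_\U$. These assemble into a commutative square
\[\xymatrix{
 \U\ar[r]^-{j}\ar[d]_-{g} & \X\ar[d]^-{f}\\
 f(\U)\ar[r]^-{j'} & \Y
}\]
in which $f\circ j = j'\circ g$.

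First I would apply Lemma~\ref{lem:stein-open-immersion} twice: to $\U\subseteq\X$, whose complement has codimension at least $2$, to get $j_*\O_\U=\O_\X$; and to $f(\U)\subseteq\Y$ to get $j'_*\O_{f(\U)}=\O_\Y$. Since $g$ is an isomorphism it is trivially Stein, so $g_*\O_\U=\O_{f(\U)}$. I would then compose these identities around the square, using functoriality of pushforward:
\[
 f_*\O_\X = f_*j_*\O_\U = (j'\circ g)_*\O_\U = j'_*g_*\O_\U = j'_*\O_{f(\U)} = \O_\Y,
\]
which is exactly the Stein condition for $f$.

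The one point that needs care — the main, if minor, obstacle — is that Lemma~\ref{lem:stein-open-immersion} is stated for an open \emph{subscheme}, whereas $\U$ here is an open substack, and one must also know that $f(\U)$ is open (which follows from $f|_\U$ being an isomorphism onto its image). I would argue that the proof of the lemma applies verbatim to substacks: the Stein property is local on the base in the smooth topology, and after pulling back along a smooth cover by a scheme the complement still has codimension at least $2$ (smooth morphisms being flat with equidimensional fibers), so the affine computation of the lemma goes through. Everything else is the formal manipulation of pushforwards displayed above.
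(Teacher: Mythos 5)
Your proof is correct and is essentially the paper's own argument: the paper likewise applies Lemma~\ref{lem:stein-open-immersion} to the inclusion $i\colon\U\hookrightarrow\X$ and to $f\circ i$ (which you simply factor further as $j'\circ g$), and concludes $f_*\O_\X=f_*i_*\O_\U=\O_\Y$. Your aside about the lemma being stated for an open subscheme rather than substack is a reasonable observation, but does not change the route.
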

\begin{proof}
 Let $i\colon \U\to \X$ be the inclusion. By Lemma \ref{lem:stein-open-immersion}, we have that $i$ and $f\circ i$ are Stein. It follows that $f_*\O_\X = f_*i_*\O_\U=\O_\Y$, so $f$ is Stein.
\end{proof}

\begin{proposition}\label{prop:smooth-maps-to-[An/Gm^n]}
 Let $D_1,\dots, D_n$ be effective Cartier divisors on a locally finite type scheme $X$ over a field $k$. Let $x\in X$ be a point at which $X$ is smooth and at which the $D_i$ have simple normal crossings. Then the induced morphism $\phi\colon X\to [\AA^n/\GG_m^n]$ is smooth at $x$.
\end{proposition}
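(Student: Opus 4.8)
The plan is to verify smoothness after passing to a smooth atlas of the target and an \'etale chart of the source, in which $\phi$ takes an explicit coordinate form where smoothness is transparent. The starting observation is the standard dictionary: a morphism to $[\AA^1/\GG_m]$ is the datum of a line bundle with a section, and $[\AA^n/\GG_m^n]\cong[\AA^1/\GG_m]^n$, so $\phi$ is the product of the morphisms attached to the pairs $(\O(D_i),s_i)$, where $s_i$ is the canonical section cutting out $D_i$.

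First I would reduce to a normal form near $x$. After reindexing, assume $D_1,\dots,D_d$ pass through $x$ and $D_{d+1},\dots,D_n$ avoid it. Since $X$ is smooth at $x$ and the $D_i$ have simple normal crossings there, I would choose an \'etale neighborhood $g\colon U\to X$ with a point $u\mapsto x$ carrying \'etale coordinates $(t_1,\dots,t_m)\colon U\to\AA^m$, together with trivializations of the $\O(D_i)|_U$, arranged so that $s_i$ becomes the function $t_i$ for $i\le d$ and the function $1$ for $i>d$. This uses that $D_i=V(t_i)$ near $x$ for $i\le d$ (so $s_i$ and $t_i$ differ by a unit, which can be absorbed into the trivialization) and that $s_i$ is a nonvanishing unit near $x$ for $i>d$. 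Because smoothness at a point is \'etale-local on the source, it then suffices to show $\phi\circ g$ is smooth at $u$.

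In these coordinates $\phi\circ g$ is the composite $U\xrightarrow{(t_1,\dots,t_d,1,\dots,1)}\AA^n\xrightarrow{q}[\AA^n/\GG_m^n]$ with $q$ the quotient map. Since the last $n-d$ sections equal $1$, this factors through the open substack $\mathcal U=[\AA^d/\GG_m^d]\times[\GG_m^{n-d}/\GG_m^{n-d}]\cong[\AA^d/\GG_m^d]$, the locus where the last $n-d$ coordinates are invertible, and the open immersion $\mathcal U\hookrightarrow[\AA^n/\GG_m^n]$ is smooth. Hence it is enough to prove that $\phi'\colon U\to[\AA^d/\GG_m^d]$, given by $(\O_U,t_1),\dots,(\O_U,t_d)$, is smooth at $u$. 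But $\phi'=q_d\circ\tau$, where $q_d\colon\AA^d\to[\AA^d/\GG_m^d]$ is the smooth quotient map and $\tau\colon U\to\AA^d$ is the composite of the \'etale map $(t_1,\dots,t_m)\colon U\to\AA^m$ with the projection to the first $d$ coordinates; as a composite of smooth morphisms $\phi'$ is smooth. (Equivalently, one can base-change $\phi\circ g$ along $q$ to the frame bundle $U\times\GG_m^n\to\AA^n$, $(u',\lambda)\mapsto(\lambda_1t_1,\dots,\lambda_dt_d,\lambda_{d+1},\dots,\lambda_n)$, and check directly that its differential at $u$ is surjective, the Jacobian having the independent rows $\lambda_j\,dt_j$ for $j\le d$ and $d\lambda_j$ for $j>d$.)

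The dictionary between Cartier divisors and maps to $[\AA^1/\GG_m]$, together with the fact that smoothness may be tested \'etale-locally on the source and smooth-locally on the target, are routine. I expect the one genuinely load-bearing step to be the local normal-crossings reduction: extracting from the SNC hypothesis a single \'etale chart on which every $s_i$ is literally a coordinate function or the constant $1$, and in particular simultaneously trivializing the $\O(D_i)$ and normalizing their sections. Once that normal form is in hand the smoothness is formal, as above.
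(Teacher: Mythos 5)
Your argument is correct and is essentially the paper's: both reduce to the divisors actually passing through $x$ (absorbing the others into the open substack where the corresponding coordinates are invertible), put those divisors into coordinate normal form, and exhibit $\phi$ as a composite of a coordinate projection with the smooth quotient map $\AA^n\to[\AA^n/\GG_m^n]$. The only difference is technical: the paper achieves the normal form by passing to the completed local ring via the Cohen structure theorem and checking formal smoothness, whereas you work with an \'etale chart in which the $s_i$ become coordinate functions --- both devices are standard and valid here.
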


\begin{remark}
 Smoothness of $X$ and the divisors at $x$ can be checked on a smooth cover of $X$, as can the property of having simple normal crossings. Therefore, this smoothness criterion applies to stacks as well.

 However, note that smoothness of the map $\phi\colon \X\to [\AA^n/\GG_m^n]$ \emph{does not} entail representability of the map. It simply means that for any smooth cover by a scheme $U\to\X$, the composite map $U\to [\AA^n/\GG_m^n]$ is smooth.
\end{remark}

\begin{proof}
 If some $D_i$ does not pass through $x$, then there is an open neighborhood of $x$ such that $\phi$ factors through $[(\AA^{n-1}\times \GG_m)/\GG_m^n]=[\AA^{n-1}/\GG_m^{n-1}]$. Smoothness can be checked on this neighborhood. We may therefore assume that all the divisors pass through $x$.

 We may verify formal smoothness at $x$ after restricting to the completed local ring $\hhat \O_{X,x}$. Since $X$ is locally of finite type, formal smoothness implies smoothness. By the Cohen structure theorem \cite[Theorem 7.7]{eisenbud}, $\hhat\O_{X,x}=k[[x_1,\dots, x_r]]$, and since the divisors have simple normal crossing, we may choose coordinates so that the divisor $D_i$ is the vanishing locus of the coordinate $x_i$. Then $\phi$ is a composition of three formally smooth morphisms: the ``inclusion'' of the complete local ring $\spec \hhat\O_{X,x}\to \AA^r$, the coordinate projection $\AA^r\to \AA^n$, and the quotient morphism $\AA^n\to [\AA^n/\GG_m^n]$.
\end{proof}

\subsection{The Weak Etale Slice Argument}
Here we prove a weak form of Luna's slice theorem. Our hypotheses are weaker than those in Luna's slice theorem (e.g.~we do not assume an action of a reductive group, only that the stabilizers are reductive), as is the conclusion (we do not get \emph{strong} \'etaleness). Since the hypotheses differ from the standard result significantly, we reproduce the proof here.

\begin{definition}\label{def:Ztimes^HG}
 Let $Z$ be a scheme with an action of a group scheme $H$, and let $H\subseteq G$ be a subgroup. Then $Z\times^H G$ (or $G\times^H Z$) denotes $(G\times Z)/H$, where the action of $H$ is given by $h\cdot (g,z)=(gh^{-1},h\cdot z)$.
\end{definition}
\begin{lemma}\label{lem:tangent-space-of-Gx^HZ}
 Let $Z$ be a scheme over a field $k$ of characteristic $0$. Let $G$ be a group scheme over $k$, and let $H\subseteq G$ be a subgroup. The tangent space to $G\times^H Z$ at the image of $(g,z)$ is $(T_gG\oplus T_zZ)/T_eH$, where the inclusion $T_eH\to T_gG\oplus T_zZ$ is induced by the inclusion $H\to G\times Z$, $h\mapsto (gh^{-1},h\cdot z)$.

 Moreover, $G\times^H Z$ is smooth at the image of a $k$-point $(g,z)$ if and only if $Z$ is smooth at $z$.
\end{lemma}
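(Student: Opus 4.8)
The plan is to exploit the fact that the $H$-action defining $G\times^H Z$ is free, so that the quotient is an algebraic space whose honest tangent space is computed as a cokernel, with no stacky correction term.

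First I would observe that the action $h\cdot(g,z)=(gh^{-1},h\cdot z)$ is free: the stabilizer of $(g,z)$ consists of those $h$ with $gh^{-1}=g$ and $h\cdot z=z$, and the first equation alone forces $h=e$. Since we are in characteristic $0$, $H$ is smooth, so the quotient map $\pi\colon G\times Z\to G\times^H Z=[(G\times Z)/H]$ is an $H$-torsor; in particular it is smooth and surjective, and $G\times^H Z$ is an algebraic space. Because the point $\pi(g,z)$ then has trivial stabilizer, its tangent space is a genuine $k$-vector space, not a two-term complex.

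Next, for the tangent space formula, I would use the relative tangent sequence of the smooth morphism $\pi$. The fiber of $\pi$ through $(g,z)$ is the $H$-orbit, which — the action being free — is isomorphic to $H$ via the orbit map $\rho\colon H\to G\times Z$, $h\mapsto(gh^{-1},h\cdot z)$; its tangent space at $(g,z)$ is the image of $d\rho_e\colon T_eH\to T_gG\oplus T_zZ$, and freeness makes $d\rho_e$ injective. The tangent sequence then reads
\[
0\to T_eH\xrightarrow{d\rho_e} T_gG\oplus T_zZ\to T_{\pi(g,z)}(G\times^H Z)\to 0,
\]
identifying the tangent space with $(T_gG\oplus T_zZ)/T_eH$ and identifying the relevant map as the one induced by $\rho$, exactly as claimed. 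For the smoothness statement I would argue that smoothness ascends and descends along the smooth surjection $\pi$, so $G\times^H Z$ is smooth at $\pi(g,z)$ iff $G\times Z$ is smooth at $(g,z)$; since a group scheme in characteristic $0$ is automatically smooth, $G\times Z$ is smooth at $(g,z)$ iff $Z$ is smooth at $z$, giving the equivalence.

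The only point that really needs care is the claim that the tangent space of the quotient is the plain cokernel: this is precisely what freeness (eliminating the stacky $H^{-1}$ term) together with the characteristic-$0$ smoothness of $H$ buys us, and it is what lets the tangent sequence of the torsor $\pi$ be applied verbatim. Once that is in place, identifying $d\rho_e$ with the stated inclusion $T_eH\to T_gG\oplus T_zZ$ is a routine unwinding of the orbit map $h\mapsto(gh^{-1},h\cdot z)$.
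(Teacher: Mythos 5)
Your proposal is correct and follows essentially the same route as the paper: both identify the fiber of the smooth (torsor) map $G\times Z\to G\times^H Z$ over the image of $(g,z)$ with the orbit $\{(gh^{-1},h\cdot z)\mid h\in H\}$, read off the tangent space from the relative tangent sequence of that smooth map, and deduce the smoothness equivalence by ascending and descending smoothness along the two torsor projections $G\times Z\to G\times^H Z$ and $G\times Z\to Z$, using that $G$ and $H$ are smooth in characteristic $0$. Your explicit remark that the action is free (so the quotient is an algebraic space with an honest tangent space) is a small justification the paper leaves implicit, but it does not change the argument.
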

\begin{proof}
 We have a smooth map $G\times Z\to G\times^H Z$ whose fiber over the image of $(g,z)$ is $\{(gh^{-1},h\cdot z)|h\in H\}$. For any smooth map, the tangent space of an image point is the quotient of the tangent space of the point by the tangent space of the fiber at that point. This proves the first statement.

 We have that $G\times Z$ is an $H$-torsor over $G\times^H Z$ and a $G$-torsor over $Z$. Smoothness can be checked locally in the smooth topology.  $G$ and $H$ are smooth as we are over a field of characteristic zero, so we see that $Z$ is smooth at $z$ if and only if $G\times Z$ is smooth at $(g,z)$ if and only if $G\times^H Z$ is smooth at the image of $(g,z)$.
\end{proof}

\begin{lemma}\label{lem:qcompact-etale}
 Let $f\colon Y\to X$ be a quasi-compact morphism of schemes and $x\in X$ a point so that $f$ is \'etale at every point in the pre-image of $x$. Then there is an open neighborhood $U\subseteq X$ of $x$ so that the restriction $f^{-1}(U)\to U$ is \'etale.
\end{lemma}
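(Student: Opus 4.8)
The plan is to recast the statement as a \emph{tube} problem about the non-\'etale locus. Since \'etaleness is an open condition on the source, the set $V\subseteq Y$ of points at which $f$ is \'etale is open, and by hypothesis $f^{-1}(x)\subseteq V$. Put $Z=Y\setminus V$, the closed non-\'etale locus, and note that $x\notin f(Z)$ because the entire fibre over $x$ lands in $V$. If I can find an open $U\ni x$ in $X$ with $U\cap f(Z)=\varnothing$, then $f^{-1}(U)\subseteq V$, so the restriction $f^{-1}(U)\to U$ is \'etale and we are done. Thus everything reduces to showing that $f(Z)$ does not accumulate at $x$, i.e.\ that $x\notin\overline{f(Z)}$.

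The clean way to secure this is to use that $f$ is closed: then $f(Z)$ is a closed subset of $X$ missing $x$, and $U:=X\setminus f(Z)$ is the desired neighbourhood. I would therefore aim to invoke closedness of $f$ in whatever form is available (e.g.\ properness or finiteness of the map in question). Quasi-compactness does contribute one useful piece of information along the way: the fibre $f^{-1}(x)=V\times_X\spec\kappa(x)$ is quasi-compact and \'etale over the field $\kappa(x)$, hence a finite disjoint union of spectra of finite separable extensions, so $f^{-1}(x)$ is a finite set of points $y_1,\dots,y_n$. This finiteness is exactly what one expects to exploit if one wishes to replace $f$ near the fibre by a finite morphism and thereby make the image of the complement closed.

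The step I expect to be the main obstacle is precisely the tube step $x\notin\overline{f(Z)}$. Openness of $V$ and finiteness of the fibre are \emph{not} by themselves enough: the image of a closed set under a merely quasi-compact (even finite-type) morphism need not be closed, and can accumulate at a point whose fibre is empty. Concretely, a non-\'etale locus supported on a component whose image is dense in, but does not surject onto, a neighbourhood of $x$ would obstruct the conclusion, so the heart of the matter is to rule this out. I would do so by bringing in a closedness/properness input on $f$ (or, using the finite fibre $\{y_1,\dots,y_n\}$, by a Zariski's-Main-Theorem-type factorisation of $f$ near the fibre into an open immersion followed by a finite morphism, whose image is then closed). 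This is where the real content of the lemma lies, and where I would concentrate the care.
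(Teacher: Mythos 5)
You have correctly reduced the problem to showing $x\notin\overline{f(Z)}$, where $Z\subseteq Y$ is the closed non-\'etale locus, and your worry about that step is not a technicality: it is fatal, because the hypotheses supply no closedness of $f$ and the obstruction you describe actually occurs. Take $X=\AA^1=\spec k[t]$ with $x$ the origin, and let $Y=\spec k[t]\;\sqcup\;\spec k[t,t^{-1},s]$, where $f$ is the identity on the first component and on the second component is the composite $\GG_m\times\AA^1\to\GG_m\hookrightarrow\AA^1$ (nowhere \'etale, as its fibres are one-dimensional). Then $f$ is affine, hence quasi-compact; the fibre over $x$ is the single point of the first component, where $f$ is \'etale; yet every open $U\ni x$ meets $\GG_m$, so $f^{-1}(U)$ contains non-\'etale points of the second component. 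Thus the lemma as stated is false, and no argument, yours or any other, can close the gap without an extra hypothesis forcing $x\notin\overline{f(Z)}$ (e.g.\ that $f$ is closed or finite over a neighbourhood of $x$), which is exactly the input you said you would need but do not have.

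For comparison, the paper's proof takes a different route from your tube reduction: for each of the finitely many $y\in f^{-1}(x)$ it chooses an open $V_y\ni y$ on which $f$ is \'etale, sets $U=\bigcap_y f(V_y)$ (open, since \'etale maps are open), and asserts $f^{-1}(U)\subseteq\bigcup_y V_y$. That containment is where the same gap hides: a point of $f^{-1}(U)$ is only required to share its image with some point of each $V_y$, and in the example above one gets $U=X$ while $f^{-1}(U)=Y\not\subseteq V_y$. So your diagnosis of where the real content lies is exactly right, and the paper's proof does not supply it either. In the one place the lemma is invoked (Proposition \ref{P:luna}) the argument is repairable without it: the \'etale locus $W$ of $G\times^H Z'\to X$ is open and $G$-invariant, hence of the form $Z''\times^H G$ for an open $H$-invariant $Z''\subseteq Z'$ containing $x$, and one may take $Z=Z''$; what is actually needed there is an \'etale map whose open image contains $x$, not literally the preimage of a neighbourhood of $x$.
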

\begin{proof}
 For every point $y\in f^{-1}(x)$, let $V_y\subseteq Y$ be an open neighborhood of $y$ so that $f|_{V_y}$ is \'etale. Since \'etale morphisms are open, $f(V_y)\subseteq X$ is open. Since the fiber $f^{-1}(x)$ is quasi-compact and \'etale over the point $x$, it is finite. Let $U=\bigcap_{y\in f^{-1}(x)}f(V_y)$. Then $U$ is an open neighborhood of $x$ such that $f^{-1}(U)\subseteq \bigcup_{y\in f^{-1}(x)}V_y$, so the induced morphism $f^{-1}(U)\to U$ is \'etale.
\end{proof}

\begin{proposition}[Weak \'etale slice argument]\label{P:luna}
 Let $G$ be an affine algebraic group acting on a quasi-affine scheme $X$ of finite type over an algebraically closed field $k$ of characteristic $0$. Suppose $x\in X$ is a $k$-point whose stabilizer $H\subseteq G$ is linearly reductive. Then there exists a connected locally closed $H$-invariant subscheme $Z\subseteq X$ such that $x\in Z$ and such that the induced morphism $Z\times^HG\to X$ is \'etale.
\end{proposition}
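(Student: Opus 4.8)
The plan is to realize $X$ inside a linear representation of $G$ and build the slice as a linear section, so that étaleness of the slice map reduces to a tangent-space computation in a \emph{smooth} ambient space. First I would choose a $G$-equivariant locally closed immersion $X\hookrightarrow V$ into a finite-dimensional representation $V$ of $G$; this exists because $X$ is quasi-affine of finite type over $k$ (take finitely many functions defining an immersion into affine space, then enlarge their span to a finite-dimensional $G$-submodule of $\O(X)$, using that the $G$-action makes $\O(X)$ a rational representation). Since the immersion is equivariant, its image is a $G$-invariant locally closed subscheme, and I identify $X$ with that subscheme of $V$. As $H=\stab_G(x)$ fixes $x$, it acts linearly on $V=T_xV$ fixing $x$; writing $O=G\cdot x$, the orbit-map differential identifies $T_xO\cong\g/\mathfrak h$ (where $\mathfrak h=\mathrm{Lie}(H)$) as an $H$-subrepresentation of $V$.

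Next, because $H$ is linearly reductive its representations are completely reducible, so I can pick an $H$-stable linear complement $W$ to $T_xO$ in $V$, giving $V=T_xO\oplus W$. I set $S=x+W$, an $H$-invariant smooth affine subspace through $x$, and define $Z=X\cap S$ (scheme-theoretically), which is $H$-invariant, closed in $X$, and contains $x$. The candidate slice map is the action morphism $a_X\colon Z\times^HG\to X$, $[(g,z)]\mapsto g\cdot z$, in the sense of Definition \ref{def:Ztimes^HG}.

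The crux is to establish étaleness of $a_X$ at the image of $(e,x)$, and here the clean move is to first handle the ambient map $a_V\colon S\times^HG\to V$. By Lemma \ref{lem:tangent-space-of-Gx^HZ}, $S\times^HG$ is smooth (as $S$ is smooth), and the tangent-space formula of that lemma gives $T_{[(e,x)]}(S\times^HG)\cong(\g\oplus W)/\mathfrak h\cong(\g/\mathfrak h)\oplus W$; the differential of $a_V$ is then the isomorphism $(\g/\mathfrak h)\oplus W\xrightarrow{\sim}T_xO\oplus W=V$. Over a field of characteristic $0$, a morphism of smooth stacks that is an isomorphism on tangent spaces at a point (hence unramified, and of matching dimension $\dim V$) is étale there, so $a_V$ is étale at $[(e,x)]$. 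I then observe that the square with vertical maps $Z\times^HG\to S\times^HG$ and $X\hookrightarrow V$ and horizontal action maps $a_X,a_V$ is cartesian: a point of $(S\times^HG)\times_VX$ is a pair $([(g,v)],p)$ with $g\cdot v=p\in X$, and $G$-invariance of $X\subseteq V$ forces $v=g^{-1}\cdot p\in X\cap S=Z$. Hence $a_X$ is a base change of $a_V$ and is therefore étale at $[(e,x)]$ as well. I expect this cartesian-square reduction to be the heart of the argument and the only real subtlety: it is exactly $G$-invariance of the image of $X$ in $V$ (guaranteed by equivariance of the embedding) that makes the square cartesian even though $X$ itself may be singular, and this is what lets me avoid proving flatness of $a_X$ by hand.

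Finally I would spread out and fix connectedness. The étale locus of $a_X$ is open in $Z\times^HG$; pulling back along the $H$-torsor $G\times Z\to Z\times^HG$, the locus where $(g,z)\mapsto g\cdot z$ is smooth of relative dimension $\dim H$ is open, $H$-invariant, and invariant under left translation in the $G$-factor (since each $g'\cdot(-)$ is an automorphism of $X$, étaleness at $(g,z)$ is equivalent to étaleness at $(g'g,z)$). Being invariant in the first factor, this locus has the form $G\times Z''$ for an $H$-invariant open $Z''\subseteq Z$ with $x\in Z''$. Replacing $Z$ by the connected component of $Z''$ containing $x$ (which remains $H$-invariant, as $H$ fixes $x$ and its action permutes connected components) produces a connected, locally closed, $H$-invariant $Z\ni x$ for which $Z\times^HG\to X$ is étale everywhere, as required.
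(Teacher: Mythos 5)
Your proof is correct and follows essentially the same route as the paper's: an equivariant embedding of $X$ into a representation, an $H$-stable complement to the orbit tangent space (using linear reductivity), \'etaleness of the slice map on the smooth ambient model via the tangent-space computation of Lemma \ref{lem:tangent-space-of-Gx^HZ}, a cartesian-square base change down to the possibly singular $X$, and an equivariant spreading-out plus passage to a connected component. The only difference is organizational: the paper first proves a smooth case for arbitrary smooth quasi-affine $X$ (via an $H$-equivariant splitting of $\m\to\m/\m^2$) and then base-changes from a smooth equivariant embedding, whereas you work directly with the literal linear slice $x+W$ in the representation, which suffices here.
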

This roughly says that at a point with linearly reductive stabilizer $H$, a quotient stack $[X/G]$ is \'etale locally a quotient by $H$. Explicitly, we have the \'etale representable morphism $[Z/H]\cong [(Z\times^H G)/G]\to [X/G]$.
\begin{proof}
 We first consider the case where $X$ is smooth. Let $A=\O_X(X)$. By \cite[Lemma \href{http://math.columbia.edu/algebraic_geometry/stacks-git/locate.php?tag=01P9}{01P9}]{stacks-project}, the natural map $X\to \spec A$ is an open immersion, so we identify $X$ with an open subscheme of $\spec A$. Let $\m$ be the maximal ideal in $A$ corresponding to $x\in X$. The surjection $\m\to \m/\m^2\cong (T_xX)^*$ is $H$-equivariant. Since $H$ is linearly reductive, there is an $H$-equivariant splitting, which induces an $H$-equivariant ring homomorphism $\sym^*(\m/\m^2)\to A$ sending the positive degree ideal into $\m$. This corresponds to an $H$-equivariant map $\spec A\to T_xX$ sending $x$ to $0$ and inducing an isomorphism on tangent spaces at $x$. Since $X$ and $T_xX$ are smooth, the map is \'etale at $x$ \cite[\S 2.2, Corollary 10]{NeronModels}.

 The tangent space $T_xX$ has a natural action of $H$. The tangent space to the $G$-orbit through $x$ is an $H$-invariant subspace of $T_xX$. Since $H$ is linearly reductive, this subspace has an $H$-invariant complement $V$.
 \[\xymatrix@C+2pc{
   V \ari[d] & Z'\ar[l]_{\txt{$H$-eq}} \ari[d] \ari[r] & G\times^H Z'\ar[d]\\
   T_xX & X \ar@{=}[r] \ar[l]_{\txt{$H$-eq}} & X\\
 }\]
 We define $Z'$ as $V\times_{T_xX}X$. This is a closed $H$-invariant subscheme of $X$ which contains $x$. The map $Z'\to V$ is $H$-equivariant and is \'etale over $V$ at $x$. In particular, $Z'$ is smooth at $x$. The action of $G$ induces a morphism $G\times^H Z'\to X$. By Lemma \ref{lem:tangent-space-of-Gx^HZ}, $G\times^H Z'$ is smooth at the image of $(e,x)$, $X$ is smooth at $x$, and the map induces an isomorphism of tangent spaces since $T_xZ'\cong V$ is complementary to $T_x(G\cdot x)$. 
 By \cite[\S 2.2, Corollary 10]{NeronModels}, the map $G\times^HZ'\to X$ is \'etale at the image of $(e,x)$. Since the morphism is $G$-equivariant and every point in the fiber over $x$ is in a single $G$-orbit, it is \'etale at every point in the fiber, and therefore \'etale over a neighborhood of $x\in X$ by Lemma \ref{lem:qcompact-etale}. Since this map is $G$-equivariant, the locus in $X$ where it is \'etale is a $G$-invariant open neighborhood $U$ of $x$. Setting $Z=Z'\cap U$, we get that $Z\times^H G\to X$ is \'etale. This completes the proof in the case when $X$ is smooth.

 Now consider the case where $X$ is not smooth. We may choose a $G$-equivariant immersion of $X$ into a smooth scheme $X_0$. Indeed, $X_0$ can be chosen to be a finite-dimensional representation of $G$ \cite[Theorem 1.5]{Popov-Vinberg}. As shown above, there are representations $V\subseteq W$ of $H$, a $G$-invariant open neighborhood $U_0$ of $x$, and a closed subscheme $Z_0\subseteq U_0$ such that $Z_0=V\times_{W} U_0$ and $Z_0\times^H G\to U_0$ is \'etale. Setting $U=U_0\times_{X_0} X$ and $Z=Z_0\cap X$, we have the following cartesian diagram:
 \[\xymatrix{
  Z\times^H G\ar[r]\ar[d] & Z_0\times^H G\ar[d]\\
  U\ar[r]\ar[d] & U_0\ar[d]\\
  X\ar[r] & X_0
 }\]
 Since $Z_0\times^HG\to U_0$ is \'etale, so is $Z\times^H G\to U$.

 Finally, since $x$ is fixed by $H$, the connected component of $Z$ which contains $x$ is $H$-invariant. We may replace $Z$ by this connected component.
\end{proof}

\subsection{A Characterization of Pointed Toric Varieties}

The proof of the following proposition is due to Vera Serganova (see \cite{MO62182}).
\begin{proposition}\label{prop:unstable-closure-contains-highest-weight}
 Let $V$ be a representation of a linearly reductive group $G$ over a field $k$ of characteristic $0$, and let $Z=\overline{G\cdot v}\subseteq V$ be the closure of an unstable $G$-orbit $($i.e.~$0\in Z)$. If $Z$ is not contained in a direct sum of $1$-dimensional representations of $G$, then it contains a positive highest weight vector $($with respect to some Borel subgroup of $G)$.
\end{proposition}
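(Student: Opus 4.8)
The plan is to reduce the statement to producing a single nonzero vector of $Z$ fixed by a maximal unipotent subgroup, and then to construct such a vector as an honest limit inside the orbit.

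First I would set up coordinates. Replacing $G$ by its identity component (a connected reductive group, as $k$ has characteristic $0$) does not change the notion of highest weight vector, so assume $G$ is connected. Since $0\in Z=\overline{G\cdot v}$ and $G$ is reductive, the Hilbert--Mumford criterion produces a one-parameter subgroup $\lambda\colon\GG_m\to G$ with $\lim_{t\to 0}\lambda(t)\cdot v=0$; after conjugating $v$ within its orbit I may assume $\lambda$ lands in a fixed maximal torus $T$, and I fix a Borel $B=TU$ (with $U$ its unipotent radical) for which $\lambda$ is dominant. Decomposing $v=\sum_\chi v_\chi$ into $T$-weight spaces, the limit condition says $\langle\lambda,\chi\rangle>0$ for every weight $\chi$ occurring in $v$. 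Let $W=\langle G\cdot v\rangle\subseteq V$ be the subrepresentation generated by $v$; it is the smallest subrepresentation containing $Z$, so the hypothesis says exactly that $W$ is not a sum of $1$-dimensional representations. Moreover $W$ has no trivial summand, since the projection of $v$ to any such summand is a $G$-invariant vector and must vanish at $0\in\overline{G\cdot v}$. Hence the $T$-weights of any $U$-fixed vector of $W$ are dominant and nonzero, while the hypothesis supplies an irreducible constituent $M\subseteq W$ of dimension $>1$ whose highest weight $\mu$ is a genuinely positive (non-central) dominant weight. The goal thus becomes: exhibit a nonzero $U$-fixed vector of $Z$ carrying such a weight.

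Next I would locate the target projectively. In $\PP(V)$ consider the projective orbit closure $\overline{G\cdot[v]}$, a nonempty projective $G$-variety on which $B$ acts; by the Borel fixed point theorem it contains a $B$-fixed point $[w]$. A $B$-fixed line is spanned by a $U$-fixed vector $w\in V^U$, and using the constituent $M$ one arranges $[w]$ to have nonzero weight. This shows $\overline{G\cdot[v]}$ meets the highest weight locus; the essential remaining difficulty is to promote this projective statement to an affine one, i.e.\ to show that a nonzero scalar multiple of $w$ actually lies in $Z=\overline{G\cdot v}$ and not merely in the cone over $\overline{G\cdot[v]}$. The heart of the argument is this affine lift, and here the instability of $v$ is used decisively. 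The obstruction is that a curve $g_s\in G$ realizing $g_s\cdot[v]\to[w]$ generally runs off to infinity, so $g_s\cdot v$ converges only after an external rescaling $g_s\cdot v=c_s(w+o(1))$ that need not be attainable inside $G$. The point is that because $v$ lies in the null cone, the contracting cocharacter $\lambda$ can supply this rescaling internally: one replaces the approaching curve by one of the shape $h_s=u(s^{-m})\,\lambda(s)$, where $u$ runs through a suitable one-parameter unipotent subgroup of $U$ that raises the $T$-weight toward $\mu$, and $m>0$ is the $\lambda$-weight of the relevant component of $v$, so that the blow-up rate of $u$ and the contraction rate of $\lambda$ exactly cancel. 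Then $h_s\cdot v$ converges in $V$ to a nonzero $U$-fixed vector, which lies in $Z$ because $h_s\cdot v\in G\cdot v$; being nonzero, $U$-fixed, and of positive weight, it is the desired positive highest weight vector. (The model case is $G$ acting on the binary cubics with $v=e_1^2e_2$, where $h_s\cdot v\to e_1^3$.)

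The step I expect to be the main obstacle is exactly this affine lift: organizing the competing unipotent and cocharacter degenerations so that the limit is simultaneously nonzero, finite, and annihilated by all of $U$ rather than by a single root subgroup. This requires bookkeeping of the weight polytope of $v$ and, in general, either an iteration (raising the weight one step at a time) or an appeal to Kempf's optimal one-parameter subgroup to guarantee that the surviving weight is extremal enough to be highest. The special feature that makes the construction possible is that instability forces all weights of $v$ into an open half-space, which is precisely what gives the cocharacter $\lambda$ enough room to absorb the rescaling internally.
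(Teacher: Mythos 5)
Your high-level diagnosis is exactly right, and the mechanism you propose for the ``affine lift'' --- degenerating along a curve of the form (unipotent element with exploding parameter) composed with (contracting cocharacter), so that the instability of $v$ absorbs the rescaling internally --- is precisely the engine of the paper's proof. But you have left that engine unbuilt: you name it as the main obstacle, propose the curve $h_s=u(s^{-m})\lambda(s)$ with a single exponent $m$, and then defer the bookkeeping. That bookkeeping \emph{is} the proof. With a single exponent attached to ``the relevant component of $v$,'' the limit of $h_s\cdot v$ will in general either blow up (if some other $\lambda$-weight component of $v$ acquires a higher-degree term under the unipotent flow) or lose the very term you wanted to keep; one must write $\exp(te_\alpha)\cdot v=\sum_i f_i(t)$ over the $\lambda$-weight spaces, set $m_i=\deg f_i$, and choose the rational slope $a/b=\max_i m_i/i$ so that $g(t)=\lambda(t^a)\exp(t^{-b}e_\alpha)\cdot v$ extends over $t=0$ with $g(0)\neq 0$. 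Even then $g(0)$ is improved only with respect to the one root subgroup you used, not fixed by all of $U$, so (as you anticipate) one must iterate; the termination argument is that each step strictly raises the minimal $\lambda$-weight of the vector while keeping it in $Z$ and outside any sum of $1$-dimensional subrepresentations (it lies in the image of $e_\alpha$, which kills such summands), and $V$ is finite-dimensional. None of this is carried out in your write-up, so what you have is a correct plan rather than a proof.

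Two smaller points. The detour through $\PP(V)$ and the Borel fixed point theorem does no work: as you yourself observe, a $B$-fixed point of $\overline{G\cdot[v]}$ need not lift to $Z$, and in the end you never use $[w]$ --- the affine degeneration must be built from scratch, so that paragraph can be dropped. And the claim that one can ``arrange $[w]$ to have nonzero weight'' using the constituent $M$ is asserted, not proved; the positivity of the limiting vector should instead be extracted from the construction itself, as above.
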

\begin{proof}
 Note that $v$ itself cannot be in a direct sum of 1-dimensional representations. By the Hilbert-Mumford criterion \cite[Proposition 2.4]{git}, there is a 1-parameter subgroup $\gamma\colon \GG_m\to G$ so that $\gamma(t)\cdot v$ contains 0 in its closure. We have the weight space decomposition $V=\bigoplus_{i\in \ZZ} V_i$, where $V_i=\{x\in V|\gamma(t)x=t^ix\}$. Let $v=\sum_{i\ge p} v_i$, where $v_i\in V_i$ and $v_p\neq 0$. We may assume $p>0$ (replacing $\gamma$ by its inverse if necessary).

 Let $T$ be a maximal torus containing the image of $\gamma$, and let $B\subseteq H$ be a Borel subgroup containing $T$ so that $\gamma$ pairs non-negatively with all positive roots. Since only a finite number of weights appear in $V$, we may modify $\gamma$ so that it pairs \emph{positively} with all positive roots. If $v$ is a highest weight vector with respect to $B$, then we are done. Otherwise, there is some positive root $\alpha$ so that $e_\alpha\cdot v\neq 0$, with $e_\alpha\in \g_\alpha$, where $\g_\alpha$ is the root space corresponding to $\alpha$ in the Lie algebra of $G$. Let $\exp(te_\alpha)\cdot v = \sum_{i\ge p} f_i(t)$, where $f_i(t)\in V_i\otimes k[t]$. Let $m_i=\deg f_i$. Since $\alpha$ pairs positively with $\gamma$ (and $e_\alpha\cdot V_i\subseteq V_{i+\<\gamma,\alpha\>}$), we have $e_\alpha\cdot v\in \bigoplus _{i>p}V_i$, so $m_p=0$. Moreover, since $e_\alpha\cdot v\neq 0$, some $m_i$ is positive.

 \begin{window}[0,r,%
     {\begin{tikzpicture}[scale=.8]
       \draw[<->] (4,0) node[anchor=north east] {$i$} -- (0,0) -- (0,2.5) node[anchor=north east] {$m_i$};
       \draw plot[only marks,mark=*] coordinates{(1.5,0) (2,1) (2.5,1) (3,1.5) (3.5,.5)}
       (1.5,0) node[anchor=north] {$p$};
       \draw (0,0) -- (4,2) node[anchor=south east] {slope$\;=a/b$};
      \end{tikzpicture}%
     },]
 Let $\frac ab\in \QQ$ be the rational number so that $m_i \le \frac ab i$ for all $i$ and $m_j=\frac ab j$ for some $j$. Consider the function $g\colon \AA^1\to V$ given by $g(t) = \sum t^{a\cdot i}f_i(t^{-b})$. Note that this is well defined since $\deg f_i = m_i \le \frac ab i$ for all $i$, so $\deg\bigl(t^{a\cdot i}f_i(t^{-b})\bigr)=a\cdot i - b\cdot m_i\ge 0$. Note also that $g(0)\neq 0$ since $m_j=\frac ab j$ for some $j$. For $t\neq 0$, we have that $g(t) = \gamma(t^a)\cdot \exp(t^{-b}e_\alpha)\cdot v \in Z$. Since $Z$ is closed, we have that $g(0)\in Z$. Note that the minimal weight (with respect to $\gamma$) appearing in $g(0)$ is greater than $p$ and that $g(0)$ does not lie in a direct sum of 1-dimensional representations since it is in the image of $e_\alpha$. Since $V$ is finite-dimensional, repeating this procedure a finite number of times produces a positive highest weight vector in $Z$.\qedhere
 \end{window}
\end{proof}

\begin{proposition}\label{prop:characterization-pointed-toric}
 Suppose $Z$ is an irreducible affine scheme of finite type over an algebraically closed field $k$ of characteristic $0$, with an action of a linearly reductive group $H$. Suppose that $x\in Z$ is an $H$-invariant $k$-point, that $Z$ contains a dense open stabilizer-free orbit, and that the stabilizer of each $k$-point of $Z$ is linearly reductive. Then $H$ is a torus. In particular, if $Z$ is reduced and normal, it is a toric variety.
\end{proposition}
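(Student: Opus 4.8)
The plan is to show that the connected group $H$ has no roots, which forces it to be a torus. First note that $H$ \emph{is} connected: the dense open orbit is stabilizer-free, hence $H$-equivariantly isomorphic to $H$, and as a dense open subscheme of the irreducible $Z$ it is irreducible; so $H$ is irreducible and therefore connected. Since $H$ is linearly reductive in characteristic $0$, it is connected reductive, and it suffices to rule out the existence of roots.

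The main device is an equivariant linearization together with Proposition~\ref{prop:unstable-closure-contains-highest-weight}. Writing $Z=\spec A$, the ideal $\m_x$ of the $H$-fixed point $x$ is $H$-stable and splits off the constants, $A=k\oplus\m_x$; choosing a finite-dimensional $H$-subrepresentation of $\m_x$ generating $A$ gives a closed $H$-equivariant embedding $Z\hookrightarrow V$ into a finite-dimensional representation with $x\mapsto 0$. Taking $v$ in the dense orbit we have $Z=\overline{H\cdot v}$ and $0=x\in Z$, so this orbit is unstable and Proposition~\ref{prop:unstable-closure-contains-highest-weight} yields a dichotomy: either $Z$ is contained in a direct sum of $1$-dimensional representations of $H$, or $Z$ contains a positive highest weight vector.

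The crux, and the step I expect to be the main obstacle, is to eliminate the second alternative using the reductivity of stabilizers. Let $w\in Z$ be a positive highest weight vector for a Borel $B=T\cdot U$. As a highest weight vector, $w$ is annihilated by every positive root vector and hence fixed by $U=R_u(B)$, so $U\subseteq\stab_H(w)$. The line $kw$ is $B$-stable, so its normalizer $P=\stab_H(kw)$ is a parabolic containing $B$, and $H$ acts on $kw$ through a character $\tilde\lambda\colon P\to\GG_m$ with $\stab_H(w)=\ker\tilde\lambda$. Since characters kill unipotent subgroups, $R_u(P)\subseteq\ker\tilde\lambda=\stab_H(w)$, and being normal in $P$ it is a normal unipotent subgroup of $\stab_H(w)$. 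Because the vector produced by the proposition does not lie in the span of the $1$-dimensional subrepresentations (it is in the image of a root vector $e_\alpha$), the line $kw$ is not $H$-stable, so $P$ is a proper parabolic; whence $R_u(P)\neq 1$ and $\stab_H(w)$ is not linearly reductive. This contradicts the hypothesis that every $k$-point of $Z$ has linearly reductive stabilizer, ruling out the second alternative.

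Therefore $Z$ lies in a direct sum $W=\bigoplus_i k_{\chi_i}$ of $1$-dimensional representations, and $H$ acts on $W$ through a homomorphism $\bar\rho\colon H\to\GG_m^N$ into the diagonal torus. Its kernel acts trivially on $W$, hence on $Z$, so it fixes $v$ and is contained in the trivial stabilizer $\stab_H(v)$; thus $\bar\rho$ has trivial kernel and exhibits $H$ as a closed subgroup of a torus. A connected diagonalizable group is a torus, so $H$ is a torus. Finally, if $Z$ is reduced and normal it is an irreducible normal variety carrying a dense open torus acting on itself, which is exactly a toric variety.
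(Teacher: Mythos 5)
Your proof is correct and follows the paper's overall strategy: linearize $Z$ into a representation via a finite-dimensional $H$-stable generating subspace of $\m_x$, invoke Proposition \ref{prop:unstable-closure-contains-highest-weight} for the dichotomy, and derive a contradiction with reductivity of stabilizers in the highest-weight case. Where you diverge is in how that contradiction is extracted. The paper argues that a reductive stabilizer containing the unipotent radical of a Borel must also contain the unipotent radical of the opposite Borel, hence the derived group of $H$; you instead pass to the parabolic $P=\stab_H(kw)\supseteq B$ and observe that $\stab_H(w)=\ker\tilde\lambda$ contains $R_u(P)$ as a normal subgroup, which is nontrivial once $P$ is proper. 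Your version is arguably more self-contained, since it avoids the (true but unproved-in-the-paper) fact about reductive overgroups of a maximal unipotent subgroup. One point to make explicit: to rule out $P=H$ you use that the highest weight vector supplied by Proposition \ref{prop:unstable-closure-contains-highest-weight} does not lie in the span of the $1$-dimensional subrepresentations. That is established in the \emph{proof} of that proposition (the vector is either the original $v$, which cannot lie in such a sum since $Z=\overline{H\cdot v}$ is assumed not to, or lies in the image of a root vector $e_\alpha$), but it is not part of its \emph{statement}, so you should either strengthen the statement you cite or note that the alternative case $P=H$ forces $w$ into a sum of $1$-dimensional subrepresentations and argue from there. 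This is a presentational gap rather than a mathematical one; the paper's own concluding sentence leans on essentially the same unstated strengthening.
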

\begin{proof}
 Since $H$ is isomorphic to a dense open subscheme of $Z$, it is irreducible. Let $Z=\spec A$, and let $\m\subseteq A$ be the maximal ideal corresponding to $x$. We may choose a finite-dimensional $H$-invariant subspace $V^*\subseteq \m$ such that $V^*$ generates $A$ as a $k$-algebra. Then $\spec A\to \spec(\sym^*(V^*))=V$ is a closed $H$-equivariant immersion of $Z$ into a finite-dimensional representation of $H$, sending $x$ to the origin. Since $Z$ contains a dense open stabilizer-free $H$-orbit, the subrepresentation spanned by $Z$ is faithful. If $Z$ is contained in a direct sum of 1-dimensional representations, then $H$ is diagonalizable, so it is a torus. Otherwise, $Z$ contains a positive highest weight vector $v$ by Proposition \ref{prop:unstable-closure-contains-highest-weight}.
 Then $v$ is stabilized by the unipotent radical of some Borel subgroup of $H$. Since $v$ has reductive stabilizer, it must also be stabilized by the unipotent radical of the opposite Borel subgroup, and so by the derived group of $H$, contradicting the assumption that it is a \emph{positive} weight vector.
\end{proof}

\section{The Local Structure Theorem}
\label{sec:local-structure}

The main result of this section is Theorem \ref{thm:local-structure}. Together with Lemma \ref{lem:check-toric-after-quotient}, this theorem serves as our main tool for showing that a stack is toric.

\begin{lemma}\label{lem:check-toric-after-quotient}
 Let $\X$ be an algebraic stack over a field $k$ with an action of a torus $T$ and a dense open substack which is $T$-equivariantly isomorphic to $T$. Then $\X$ is a toric stack if and only if $[\X/T]$ is a toric stack.
\end{lemma}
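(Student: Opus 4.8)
The plan is to treat the two implications separately, with the reverse one (that $\X$ is toric when $[\X/T]$ is) being the substantive direction. For the forward direction, suppose $\X=[X/G]$ is a toric stack, where $X$ is a normal toric variety with torus $T_X$ and $G\subseteq T_X$. The dense open torus of $\X$ is then $T_X/G$, and since a torus action on an irreducible stack is determined by its restriction to a dense open substack, the given action identifies $T$ with $T_X/G$. I would then invoke the standard identification of iterated quotients: as $G$ is a (necessarily normal, everything being abelian) subgroup of $T_X$ with quotient $T_X/G=T$, one has
\[
 [\X/T]=\big[[X/G]\,/\,(T_X/G)\big]\cong[X/T_X].
\]
Since $T_X$ is the full torus of the normal toric variety $X$, the right-hand side is a toric stack (with trivial torus), giving the forward implication.

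For the reverse direction, suppose $[\X/T]$ is a toric stack. First I would pin down its torus. The dense open $U\cong T$ of $\X$ is $T$-invariant, so its image $[U/T]=[T/T]=\spec k$ is a dense open substack of $[\X/T]$, in fact a single point. Hence $\dim[\X/T]=0$, so the dense open torus of the toric stack $[\X/T]$ is $0$-dimensional, i.e.\ trivial. Writing $[\X/T]=[Y/G]$ with $Y$ a normal toric variety with torus $T_Y$ and $G\subseteq T_Y$, triviality of the torus $T_Y/G$ forces $G=T_Y$. Thus $[\X/T]=[Y/T_Y]$, and $Y\to[\X/T]$ is a $T_Y$-torsor whose torsor action coincides with the toric action of $T_Y$ on $Y$.

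The heart of the argument is the fibre product
\[
 W:=\X\times_{[\X/T]}Y.
\]
Pulling back the two torsors shows that $W\to\X$ is a $T_Y$-torsor and $W\to Y$ is a $T$-torsor, and these commuting actions make $W\to[\X/T]$ a $(T\times T_Y)$-torsor. I would then show $W$ is a normal toric variety with torus $T\times T_Y$: computing the fibre over the dense torus $T_Y\subseteq Y$ gives $W\times_Y T_Y\cong U\times T_Y\cong T\times T_Y$ (since $T_Y\to[\X/T]$ factors as the structure map $T_Y\to\spec k$ into the dense point), a dense open substack on which $T\times T_Y$ acts by translation. Because $W\to\X$ is a $T_Y$-torsor, I would then conclude $\X\cong[W/(\{1\}\times T_Y)]$, exhibiting $\X$ as the quotient of a normal toric variety by a subgroup of its torus, i.e.\ a toric stack.

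The main obstacle is verifying that $W$ really is a \emph{toric variety} and not merely an algebraic stack: one must check that $W$ is a normal, irreducible scheme of finite type. Scheme-ness follows because a $T$-torsor over a scheme is Zariski-locally trivial (Hilbert's Theorem 90 for tori), so $W\to Y$ is Zariski-locally of the form $Y'\times T$ and $W$ is a scheme; normality follows since $W\to Y$ is smooth (a torsor under the smooth group $T$) and $Y$ is normal; finite type and irreducibility follow from the corresponding properties of $Y$ together with the presence of the dense open torus $T\times T_Y$. Once $W$ is known to be a toric variety, identifying the second factor $\{1\}\times T_Y$ of its torus as exactly the subgroup whose torsor quotient is $\X$ finishes the proof.
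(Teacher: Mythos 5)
Your proposal is correct and follows essentially the same route as the paper: the paper's reverse direction also forces $G=T_X$ from the dense open point and then uses the fibre product $X\times_{[\X/T]}\X$ (your $W$), observing it is simultaneously a $T$-torsor over the toric variety and a $T_X$-torsor over $\X$, hence a toric variety with torus $T_X\times T$ whose quotient by $T_X$ recovers $\X$. The one item missing from your checklist for $W$ being a toric variety is \emph{separatedness}, which the paper notes explicitly; it follows at once since $W\to Y$ is a torsor under the affine group $T$ and hence an affine morphism over the separated $Y$.
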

\begin{proof}
 If $\X=[X/G]$ is a toric stack, where $X$ is a toric variety and $G\subseteq T_X$ is a subgroup of the torus, then $T=T_X/G$. We see that $[\X/T]\cong [X/T_X]$ is a toric stack.

 Now suppose $[\X/T]=[X/G]$ is a toric stack, where $X$ is a toric variety and $G\subseteq T_X$ is a subgroup of the torus. Since $[\X/T]$ has a dense open point, we have that $G=T_X$ is the torus of $X$. Consider the following cartesian diagram:
 \[\xymatrix@!0 @C+4pc @R+1pc{
  *+<10pt>{T_X\times T} \ari[dr]\ar[dd]\ar[rr] && T\ari[dr]\ar[dd]|{\hole}\\
  & X\times_{[\X/T]}\X \ar[rr]\ar[dd] && \:\X\:\ar[dd]^{\txt{$T$-torsor}}\\
  *+<13pt>{T_X}\ari[dr]\ar[rr]|{\hole} && *+<13pt>{[T_X/T_X]=[T/T]=\ast\quad} \ari[dr]\\
  & X\ar[rr] && [X/T_X]\cong [\X/T]\phantom{\;\cong [X/T_X]}
 }\]
 The stack $X\times_{[\X/T]}\X$ has an action of the torus $T_X\times T$ and a dense open substack isomorphic to $T_X\times T$. Since $X\times_{[\X/T]}\X$ is a $T$-torsor over $X$, it is a normal separated scheme, so it is a toric variety with torus $T_X\times T$. It is also a $T_X$-torsor over $\X$, so $\X=[(X\times_{[\X/T]}\X)/T_X]$ is a toric stack.
\end{proof}

\begin{definition}\label{def:global-type}
  A finite type stack $\X$ over a field $k$ is \emph{of global type} if every geometric point is in the image of an \'etale representable map $[U/G]\to \X$, where $U$ is a quasi-affine $k$-scheme and $G$ is an affine algebraic group.
\end{definition}

The following lemma shows that Definition \ref{def:global-type} agrees with \cite[Definition 2.1]{rydh:noetherian}.
\begin{lemma}
\label{l:global-type-definitions-agree}
A finite type stack $\X$ over a field $k$ is of global type if and only if there is a finitely presented \'etale representable surjection $[U/GL_n]\to\X$, where $U$ is a quasi-affine $k$-scheme.
\end{lemma}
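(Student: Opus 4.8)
The plan is to prove the two implications separately; the ``if'' direction is immediate and the ``only if'' direction carries all the content. For the ``if'' direction, suppose we are given a finitely presented étale representable surjection $[U/GL_n]\to\X$ with $U$ quasi-affine. Since $GL_n$ is affine, this single morphism already exhibits \emph{every} geometric point of $\X$ as lying in the image of an étale representable map of the shape demanded by Definition \ref{def:global-type}, so $\X$ is of global type. All the work is in the converse.

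For the ``only if'' direction, I would first reduce to finitely many charts. By Definition \ref{def:global-type}, every geometric point of $\X$ lies in the image of some étale representable $[U_i/G_i]\to\X$ with $U_i$ quasi-affine and $G_i$ affine. Étale morphisms are open, so these images form an open cover of $\X$; as $\X$ is of finite type over $k$ it is quasi-compact, so finitely many charts already cover $\X$. Their disjoint union $\Y:=\coprod_i[U_i/G_i]\to\X$ is then a finitely presented, étale, representable surjection, since each of these properties is stable under finite disjoint union.

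The heart of the argument is to rewrite $\Y$ as a single $GL_N$-quotient of a quasi-affine scheme. The stack $\Y$ is of finite type over $k$ and has affine diagonal: each $U_i$ is quasi-affine, hence separated with affine diagonal, so $[U_i/G_i]$ has affine diagonal by Lemma \ref{lem:quot-has-affine-diagonal}, and a finite disjoint union of stacks with affine diagonal has affine diagonal (the off-diagonal blocks of $\Y\times\Y$ contribute empty, hence affine, fibers). Moreover $\Y$ has the resolution property: for each $i$, choose a $G_i$-equivariant open immersion $U_i\hookrightarrow\spec A_i$ with $A_i=\Gamma(U_i,\O_{U_i})$, and then a $G_i$-equivariant closed immersion $\spec A_i\hookrightarrow E_i$ into a finite-dimensional $G_i$-representation; every coherent sheaf on $[U_i/G_i]$ is the restriction of one on $[E_i/G_i]$, which is a quotient of a bundle $\O_{E_i}\otimes W$ for a $G_i$-representation $W$, and restriction preserves surjectivity. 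Applying the theorem of Totaro and Gross (cf.\ Rydh \cite{rydh:noetherian}) — that a quasi-compact algebraic stack with affine diagonal and the resolution property is of the form $[U/GL_N]$ with $U$ quasi-affine — to $\Y$ yields a presentation $\Y\cong[U/GL_N]$ with $U$ quasi-affine. The composite $[U/GL_N]\cong\Y\to\X$ is then the desired finitely presented étale representable surjection.

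The main obstacle is precisely this last step: the charts $[U_i/G_i]$ involve arbitrary affine groups $G_i$, which need not be reductive and need not be observable in their natural overgroups, and one must nonetheless present $\Y$ as a quotient by the single reductive group $GL_N$ with quasi-affine total space. Naively extending structure groups along a faithful embedding $G_i\hookrightarrow GL_N$ and forming $U_i\times^{G_i}GL_N$ need not stay quasi-affine — for $G_i$ a Borel subgroup of $GL_2$ and $U_i$ a point, this construction produces $\PP^1$ — so a direct associated-bundle argument fails, and one genuinely needs the resolution-property characterization. Establishing the resolution property for $\Y$ via the equivariant embedding into a representation (as above) is therefore the key technical input that makes the reduction to a single $GL_N$ go through.
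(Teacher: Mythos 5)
Your overall architecture (finitely many charts by quasi-compactness, then repackage the disjoint union as a single $GL_N$-quotient) is sound, and the first reduction matches the paper's. But the paragraph in which you declare the direct associated-bundle argument impossible is where you go astray, and it matters. Your example only shows that the \emph{tautological} embedding of a Borel $B\subseteq GL_2$ fails (indeed $GL_2/B=\PP^1$); it does not show that no embedding works. Totaro's Lemma 3.1 --- which is precisely the tool the paper uses --- asserts that \emph{every} affine algebraic group $G$ over a field admits a faithful representation $G\hookrightarrow GL_n$ with $GL_n/G$ quasi-affine. With such an embedding, $U\times^G GL_n\to GL_n/G$ is a quasi-affine morphism because $U\times GL_n\to GL_n$ is one and quasi-affineness of morphisms can be checked fppf-locally; hence $U\times^G GL_n$ is quasi-affine and $[U/G]\cong[(U\times^G GL_n)/GL_n]$. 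The paper then equalizes the various $n_i$ by passing from $GL_n$ to $GL_{n+1}$ using the affine envelope $\spec\O_U(U)$, and finally takes the disjoint union. So the ``naive'' route you reject is, correctly implemented, the actual proof, and it is considerably more elementary than yours.

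Your replacement route has two genuine problems as written. First, the version of Totaro's Theorem 1.1 quoted in Remark \ref{rmk:everything-is-global-type} requires the stack to be \emph{normal} and noetherian, whereas Definition \ref{def:global-type} imposes no normality on the $U_i$; so $\Y=\coprod_i[U_i/G_i]$ need not be normal and you cannot invoke that theorem as cited. (Gross's later strengthening removes normality, but then the lemma rests on far heavier machinery than it warrants, and you would need to cite that result precisely.) Second, your construction of resolutions passes through a $G_i$-equivariant \emph{closed} immersion $\spec\Gamma(U_i,\O_{U_i})\hookrightarrow E_i$ into a finite-dimensional representation; this does not make sense in general, because $\Gamma(U_i,\O_{U_i})$ need not be a finitely generated $k$-algebra when $U_i$ is merely quasi-affine of finite type. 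This step is repairable (use a Popov--Vinberg equivariant locally closed immersion of $U_i$ itself into a representation, extend coherent sheaves across the open immersion into the closure, and push forward), but as stated it is broken. I would recommend abandoning the resolution-property detour entirely and running the direct structure-group change via Totaro's Lemma 3.1.
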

\begin{proof}
  It is clear that every stack satisfying this condition is of global type.

  For the converse, we first show that $G$ can always be taken to be $GL_n$. By \cite[Lemma 3.1]{Totaro}, every affine algebraic group $G$ over a field has a faithful representation $G\hookrightarrow GL_n$ so that $GL_n/G$ is quasi-affine. The projection $U\times GL_n\to GL_n$ is quasi-affine, so the induced quotient map $U\times^G GL_n\to GL_n/G$ is quasi-affine (quasi-affineness of a morphism can be checked fppf locally), so $U\times^G GL_n$ is quasi-affine. We now have that $[U/G]\cong [(U\times^G GL_n)/GL_n]$ is a quotient of a quasi-affine scheme by $GL_n$.

  Next, since $\X$ is of finite type over a field, it is quasi-compact, so it has an \'etale surjection from a finite number of $[U_i/GL_{n_i}]$, where the $U_i$ are quasi-affine. We show that the $n_i$ may be increased until they are all equal. If $U$ is quasi-affine with affine envelope $W=\spec \O_U(U)$, an action of $GL_n$ on $U$ induces an action of $GL_n$ on $W$. The $GL_n$-equivariant immersion $U\to W$ induces a $GL_{n+1}$-equivariant immersion $U\times^{GL_n}GL_{n+1}\to W\times^{GL_n}GL_{n+1}$, but since $W\times^{GL_n}GL_{n+1} = (W\times GL_{n+1})/GL_n$ is a quotient of an affine scheme by a free action of a linearly reductive group, it is affine, so $U\times^{GL_n}GL_{n+1}$ is quasi-affine, and $[U/GL_n]=[(U\times^{GL_n}GL_{n+1})/GL_{n+1}]$.

  Thus any stack $\X$ which is of global type has a representable \'etale surjection $[V /GL_n]\to \X$, where $V=\bigsqcup_i U_i$ is a finite union of quasi-affine schemes. So $V$ is quasi-affine, and therefore quasi-compact (recall that quasi-compactness is part of the definition of quasi-affineness). Since $\X$ is of finite type and $V$ is smooth over $\X$, $V$ is of finite type, so the morphism $[V /GL_n]\to \X$ is finitely presented.
\end{proof}

\begin{remark}\label{rmk:everything-is-global-type}
  We know of no stack of finite type over a field which has affine diagonal and is \emph{not} of global type. Here we summarize what is known:
  \begin{enumerate}\setcounter{enumi}{-1}
    \item \label{alper-hall-rydh} As this paper went to press, we learned of a forthcoming result of Alper, Hall, and Rydh which implies that for a finite type stack $\X$ with affine diagonal over an algebraically closed field, any closed point with linearly reductive stabilizer is in the image of a representable etale morphism $[U/G]\to \X$, where $U$ is affine and $G$ is the stabilizer of the point. Since the images of such morphisms are open, this shows that all global type hypotheses in this paper are superfluous.
    \item \label{everything-gt-1} If $\X$ is covered by \'etale representable morphisms from stacks of the form $[X/G]$, where $X$ is a \emph{normal noetherian scheme} over $k$ and $G$ is an affine algebraic group, then $\X$ is of global type. To see this, let $G^\circ\subseteq G$ be the connected component of the identity, and let $H=G/G^\circ$.
    Then $[X/G^\circ]$ is of global type by \cite[Remark 2.3]{rydh:noetherian}, and $[X/G^\circ]\to [[X/G^\circ]/H]=[X/G]$ is \'etale and representable since it is a torsor under the discrete finite group $H$, so $[X/G]$ is of global type.
    \item Totaro has shown \cite[Theorem 1.1]{Totaro} that a normal noetherian stack $\X$ is of the form $[U/GL_n]$ with $U$ quasi-affine if and only if $\X$ has the resolution property. Thus, a stack is of global type if and only if every point has a representable \'etale neighborhood which has the resolution property.
    \item Suppose $\X=[X/G]$ with $X$ a quasi-affine scheme and $G$ an affine algebraic group, and suppose $\X$ has an action of an algebraic group $T$. We do not know if $[\X/T]$ must be of global type. In practice, the action of $T$ on $\X$ is usually induced by an action of $\ttilde G$ on $X$, where $\ttilde G$ is an extension of $T$ by $G$. Then $\ttilde G$ is a $G$-torsor over $T$, so $\ttilde G$ is affine. Hence, $[\X/T] = [X/\ttilde G]$ is of global type by (\ref{everything-gt-1}) above.
    \item In \cite[Questions 2.12]{rydh:noetherian}, Rydh asks if every quasi-compact stack with quasi-affine diagonal is of global type.
  \end{enumerate}
\end{remark}

\begin{theorem} \label{thm:local-structure}
 Suppose $\X$ is an Artin stack of finite type over an algebraically closed field $k$ of characteristic $0$, with a dense open $($non-stacky$)$ $k$-point. Let $\xi\colon \spec k\to \X$ be a point. Suppose
 \begin{enumerate}
  \item $\X$ is normal,
  \item $\X$ has affine diagonal,
  \item $\X$ has linearly reductive stabilizers at geometric points, and
  \item $\X$ is of global type $($see Remark \ref{rmk:everything-is-global-type}$)$.
 \end{enumerate}
 Then there is an affine toric variety $X$ with torus $T$ and an open immersion $[X/T]\hookrightarrow \X$ sending the distinguished closed point of $[X/T]$ to $\xi$.\footnote{Note that $[X/T]$ has a distinguished closed point, even if $X$ does not. An affine toric variety $X$ can only fail to have a distinguished closed point if it is of the form $X'\times T_0$, where $X'$ has a distinguished closed point and $T_0$ is a torus. In this case, $[X/T]\cong [X'/(T/T_0)]$.}
\end{theorem}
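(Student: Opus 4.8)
The plan is to use the global-type hypothesis together with the weak \'etale slice argument to reduce to an explicit local model $[Z/H]$ with $H$ linearly reductive, to recognize $Z$ as an affine toric variety and $H$ as its torus via Proposition \ref{prop:characterization-pointed-toric}, and finally to promote the resulting \'etale representable map to an open immersion. Concretely, since $\X$ is of global type, $\xi$ lies in the image of an \'etale representable map $[U/G]\to\X$ with $U$ quasi-affine and $G$ affine; I would lift $\xi$ to a $k$-point $u\in U$ and set $H=\stab_G(u)$. By Proposition \ref{prop:finite-index}, $H$ includes with finite index into $\stab_\X(\xi)$, which is linearly reductive by hypothesis, so $H$ is reductive, hence (in characteristic $0$) linearly reductive, and $u$ is $H$-fixed. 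Applying Proposition \ref{P:luna} to the $G$-action on $U$ at $u$ produces a connected $H$-invariant locally closed $Z\subseteq U$ containing $u$ for which $\Z:=[Z/H]\to[U/G]\to\X$ is \'etale and representable.

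Next I would transport all hypotheses to $\Z$. Because $\Z\to\X$ is \'etale and representable, $\Z$ is normal (hence $Z$ is normal, and being connected, irreducible), and by Proposition \ref{prop:finite-index} again (with reductive $=$ linearly reductive in characteristic $0$) its geometric stabilizers are linearly reductive; by Lemma \ref{lem:quot-has-affine-diagonal}, $\Z=[Z/H]$ has affine diagonal. The dense open point of $\X$ pulls back to a dense open substack of $\Z$ which is \'etale over $\spec k$; irreducibility of $\Z$ forces this to be a single stabilizer-free $k$-point, i.e.\ a dense free $H$-orbit $\cong H$ inside $Z$. Finally, since $Z$ is quasi-affine with $H$-fixed point $u$ and $H$ is linearly reductive, I would choose an $H$-invariant function nonvanishing at $u$ (working in the affine hull $\spec\O_Z(Z)$) to cut out an $H$-invariant \emph{affine} open $Z_0\subseteq Z$ containing $u$; being a nonempty open of the irreducible $Z$, it is again irreducible and normal, still carries the dense free orbit, and all its $k$-points have linearly reductive stabilizers.

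With $(Z_0,H)$ satisfying exactly the hypotheses of Proposition \ref{prop:characterization-pointed-toric}, I conclude that $H$ is a torus $T$ and $Z_0$ is an affine toric variety with torus $T$. This yields an \'etale representable morphism $\psi\colon[Z_0/T]\to\X$ sending the torus-fixed distinguished point of $[Z_0/T]$ to $\xi$ and restricting to an isomorphism on the dense open point.

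The hard part, and the main obstacle, is upgrading $\psi$ to an open immersion; this is where affine diagonal and the dense-open-point hypothesis do the real work. I would show $\psi$ is an isomorphism over every point of codimension $\le 1$. Over the dense point this is already known, so the content is the codimension-$1$ (valuative) analysis: at the generic point of each divisor one works over a DVR, and semi-separatedness coming from the affine diagonal, together with irreducibility and the fact that there is a single dense branch, rules out $\psi$ identifying two distinct divisorial points or enlarging their one-dimensional stabilizers. Equivalently, the cancellation property applied to the affine diagonal of $\X$ shows the diagonal of $\psi$ is an \emph{affine} open immersion in $[Z_0/T]\times_\X[Z_0/T]$, so its complement in the irreducible component containing it is empty or pure codimension $1$; the codimension-$1$ analysis shows it is empty, i.e.\ $\psi$ is a monomorphism. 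Once $\psi$ is an isomorphism away from a closed substack of codimension $\ge 2$, Corollary \ref{cor:stein-birational-morphisms} shows $\psi$ is Stein, and a Stein \'etale representable morphism is an open immersion by Zariski's main theorem. This produces the desired open immersion $[X/T]:=[Z_0/T]\hookrightarrow\X$ carrying the distinguished closed point to $\xi$; in particular $\stab_\X(\xi)=T$, so the stabilizer at $\xi$ is automatically a connected torus.
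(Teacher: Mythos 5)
Your reduction to a local model $[Z/H]$ follows the paper's proof exactly (global type gives $[U/G]\to\X$, Matsushima plus Proposition \ref{prop:finite-index} gives linear reductivity of $H$, Proposition \ref{P:luna} gives $Z$, and Proposition \ref{prop:characterization-pointed-toric} will identify $H$ as a torus), but your affinization step does not work as written. You propose to find an $H$-invariant function on $\spec\O_Z(Z)$ that is nonzero at $u$ and cuts out an $H$-invariant affine open $Z_0\subseteq Z$. Since $Z$ contains a dense open copy of $H$, \emph{every} $H$-invariant global function is constant, so its nonvanishing locus is all of $\spec\O_Z(Z)$; such a function can therefore only produce the set $Z_0=\spec\O_Z(Z)$, which is contained in $Z$ only if $Z$ was already equal to its affine hull. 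That equality is true, but it is exactly what must be proved, and your method presupposes it. The paper's argument is: the good quotient $\spec(\O_Z(Z)^H)$ is a point (again because invariant functions are constant), so $\spec\O_Z(Z)$ has a unique closed orbit, namely the fixed point $x$; the complement of $Z$ is closed and $H$-invariant, so if nonempty it would contain a closed orbit disjoint from $Z$, a contradiction. Hence $Z=\spec\O_Z(Z)$ is affine. You need this argument (or an equivalent one); invariant affine neighborhoods of a fixed point do not exist in general for quasi-affine $H$-schemes.

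The second gap is the endgame. Your plan is to show $\psi\colon[Z_0/T]\to\X$ is a monomorphism by a codimension-$1$/DVR analysis of the complement of $\Delta_\psi$, but the decisive step --- why that complement cannot have a codimension-$1$ component --- is asserted rather than proved, and it is precisely here that the affine diagonal must be made to do work. Moreover, the fallback you state, that a Stein \'etale representable morphism is an open immersion by Zariski's Main Theorem, is false without separatedness: the affine line with doubled origin mapping to $\AA^1$ is \'etale, representable, surjective, birational, and Stein, yet is not a monomorphism. (Conversely, once you know $\psi$ is a monomorphism, it is an \'etale monomorphism and hence an open immersion with no need for the Stein step.) The paper closes the argument differently: after replacing $\X$ by the open image, $Z\times_\X Z$ is affine because $Z$ is affine and $\X$ has affine diagonal; it is an $H$-torsor over an algebraic space $Y$, which is affine by GIT since $H$ is linearly reductive; thus $Y\to Z$ is a morphism of affine schemes, hence separated, and separatedness descends to $[Z/H]\to\X$. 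A representable separated \'etale birational surjection onto a normal stack is then an isomorphism by Zariski's Main Theorem. Some argument of this kind, converting the affine diagonal of $\X$ together with the affineness of $Z$ into separatedness of $\psi$, is the missing content of your sketch.
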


\begin{proof}
 Let $[U/G]\to \X$ be a representable \'etale morphism with $\xi$ in its image, where $U$ is a quasi-affine scheme and $G$ is an affine algebraic group. Let $x\in U$ be a $k$-point mapping to $\xi$, and let $H\subseteq G$ be the stabilizer of $x$. Since the morphism $[U/G]\to \X$ is \'etale representable, Proposition \ref{prop:finite-index} implies that the stabilizers of geometric points of $[U/G]$ are finite index subgroups of the stabilizers of geometric points of $\X$. That is, given a point $y\colon \spec k\to [U/G]$, $\stab_\X(y)/\stab_{[U/G]}(y)$ is finite, and therefore affine. Since the stabilizers of geometric points of $\X$ are linearly reductive, Matsushima's criterion (see \cite[Proposition 12.15(i)]{Alper:good}) implies that the stabilizers of geometric points of $[U/G]$ are linearly reductive.

 Applying Proposition \ref{P:luna}, there is a connected locally closed $H$-invariant subscheme $Z\subseteq U$ so that $x\in Z$ and $Z\times^H G\to U$ is \'etale. The morphism $Z\to \X$ is smooth, as it is the composition of smooth morphisms $Z\to [Z/H]\cong [(Z\times^H G)/G]\to [U/G]\to \X$. Since $\X$ is normal, and normality is local in the smooth topology, $Z$ is normal. Since $Z$ is also connected, it is irreducible.

 The map $[Z/H]\to [U/G]\to \X$ is \'etale and representable. Base changing $[Z/H]\to \X$ to the dense open $k$-point of $\X$, we get an irreducible \'etale cover of $\spec k$, which must be trivial since $k$ is algebraically closed. In particular, $[Z/H]$ has a dense open $k$-point, so $Z$ contains a dense open stabilizer-free $H$-orbit.

 Next we show that $Z$ must be affine. Let $A=\O_Z(Z)$. Since $Z$ is quasi-affine, it is a dense open subscheme of $\spec A$ \cite[Lemma \href{http://math.columbia.edu/algebraic_geometry/stacks-git/locate.php?tag=01P9}{01P9}]{stacks-project}. The action of $H$ on $Z$ induces an action of $H$ on $\spec A$. By \cite[Theorem 1.1]{git}, $(\spec A)/H = \spec (A^H)$ is a good quotient. Since $\spec A$ contains a dense open copy of $H$, any $H$-invariant regular function must be constant, so the good quotient is $\spec k$. It follows 
 that the closures of any two $H$-orbits intersect. But, $x\in Z$ is a closed $H$-orbit and $Z\subseteq \spec A$ is an $H$-invariant open neighborhood of $x$, so $Z=\spec A$.

 By the same argument we used in the first paragraph of this proof, the stabilizers of $[Z/H]$ are linearly reductive. Since $Z$ is smooth over $\X$, it is normal and reduced. By Proposition \ref{prop:characterization-pointed-toric}, $H$ is a torus and $Z$ is a toric variety.

 Finally, we have an \'etale representable map $[Z/H]\to \X$ whose image is an open substack. Replacing $\X$ by this open substack, we may assume the map is surjective. Now we have that $Z\to \X$ is a smooth cover. Consider the following cartesian diagram:
 \[\xymatrix{
  Z\times_\X Z\ar[d] \ar[r] & Z\ar[d]^{H\text{-torsor}}\\
  Y\ar[r]\ar[d] & [Z/H]\ar[d]\\
  Z\ar[r] & \X
 }\]
 Since $Z$ is affine and $\X$ has affine diagonal, we have that $Z\times_\X Z$ is affine. This affine space is the total space of an $H$-torsor over $Y$. Since $[Z/H]\to \X$ is representable, $Y$ is an algebraic space. Since $H$ is linearly reductive, $Y$ is an affine scheme \cite[Theorem 1.1]{git}. Since $Y$ and $Z$ are both affine, $Y\to Z$ is separated. Separatedness is local on the base in the smooth topology, so $[Z/H]\to \X$ is separated.

 Now $[Z/H]\to \X$ is representable, separated, \'etale, birational, and surjective, so it is an isomorphism by Zariski's Main Theorem \cite[Theorem 16.5]{lmb}.
\end{proof}

\section{Main Theorem: Smooth Case}\label{sec:main-smooth}
In this section, we show that every smooth ``abstract toric stack'' is actually a toric stack.  That is, we show that with suitable hypotheses, a smooth Artin stack $\X$ with dense open torus $T$ whose action on itself extends to $\X$ comes from a stacky fan.
\begin{lemma}\label{lem:smooth=>codim-preserving}
 Suppose $f\colon \X\to \Y$ is a smooth (but not necessarily representable) morphism of Artin stacks. Then $f$ is codimension-preserving: if $\Z\subseteq \Y$ is a closed substack of codimension $d$, then $\Z\times_\Y\X\subseteq \X$ is of codimension $d$.
\end{lemma}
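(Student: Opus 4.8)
The plan is to reduce to the case of schemes, where a smooth morphism is flat with equidimensional fibers and hence well known to preserve codimension, and then to transport the statement through smooth covers using the fact that the codimension of a closed substack is computed on any smooth cover by a scheme. This reduction is what makes the non-representability of $f$ harmless: although we cannot pull $\Z$ back along $f$ to a scheme directly, we can choose compatible covers on both sides.

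First I would choose a smooth surjection $V\to\Y$ from a scheme. Base changing $f$, the projection $\X\times_\Y V\to\X$ is smooth and surjective, so I may choose a smooth surjection $W\to\X\times_\Y V$ from a scheme. The composite $W\to\X$ is then a smooth cover of $\X$ by a scheme, while the composite $W\to\X\times_\Y V\to V$ is a smooth morphism of schemes: the first factor is smooth by choice, and the second is a base change of $f$.

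Next I would recall that for a smooth cover $V'\to\X'$ by a scheme, the codimension of a closed substack $\Z'\subseteq\X'$ equals $\operatorname{codim}(\Z'\times_{\X'}V',V')$, since the relative dimension of the cover over $\Z'$ agrees with that over $\X'$; this is precisely how dimensions of algebraic stacks are computed on smooth covers (and is already the convention used in Lemma \ref{lem:Weil=Cartier}). Applying this to $V\to\Y$ gives $\operatorname{codim}(\Z,\Y)=\operatorname{codim}(\Z\times_\Y V,V)$, and applying it to $W\to\X$ gives $\operatorname{codim}(\Z\times_\Y\X,\X)=\operatorname{codim}\bigl((\Z\times_\Y\X)\times_\X W,W\bigr)$. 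A routine manipulation of fiber products identifies $(\Z\times_\Y\X)\times_\X W\cong\Z\times_\Y W\cong(\Z\times_\Y V)\times_V W$.

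Finally, since $W\to V$ is a smooth morphism of schemes, it is flat with equidimensional fibers and therefore preserves codimension, giving $\operatorname{codim}\bigl((\Z\times_\Y V)\times_V W,W\bigr)=\operatorname{codim}(\Z\times_\Y V,V)$. Chaining the three equalities yields $\operatorname{codim}(\Z\times_\Y\X,\X)=\operatorname{codim}(\Z,\Y)$, as desired. The only genuine mathematical input is the scheme-level fact that smooth (flat, equidimensional-fiber) morphisms preserve codimension; I expect the main thing to watch is the bookkeeping of the fiber products together with a clean statement that codimension of a stacky substack is legitimately read off from a smooth cover, which is what licenses the whole reduction despite $f$ not being representable.
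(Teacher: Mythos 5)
Your proof is correct and follows essentially the same route as the paper: both reduce to the scheme case via smooth covers, using that codimension of a closed substack is read off on a smooth cover by a scheme and that smooth (flat, finite type) morphisms of schemes preserve codimension. The paper is just slightly more economical, taking a single smooth cover $U\to\X$ and observing that the composite $U\to\Y$ is automatically representable (hence codimension-preserving), whereas you unpack that last step with a second cover $W\to\X\times_\Y V$.
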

\begin{proof}
 Let $\pi\colon U\to \X$ be a smooth cover by a scheme. Then $g=f\circ\pi\colon U\to \Y$ is smooth and representable, so it is open and codimension-preserving. If $\Z\subseteq \Y$ is a closed substack of codimension $d$, then $\Z\times_\Y U\subseteq U$ is closed of codimension $d$. On the other hand, $U\to \X$ is codimension-preserving, and $\Z\times_\Y U = (\Z\times_\Y \X)\times_\X U$, so $\Z\times_\Y \X\subseteq \X$ is of codimension $d$.
\end{proof}

\begin{theorem}\label{thm:main-smooth}
 Let $\X$ be a smooth Artin stack of finite type over an algebraically closed field $k$ of characteristic $0$. Suppose $\X$ has an action of a torus $T$ and a dense open substack which is $T$-equivariantly isomorphic to $T$. Then $\X$ is a toric stack if and only if the following conditions hold:
 \begin{enumerate}
  \item $\X$ has affine diagonal,
  \item geometric points of $\X$ have linearly reductive stabilizers, and
  \item $[\X/T]$ is of global type (see Remark \ref{rmk:everything-is-global-type}).
 \end{enumerate}
\end{theorem}

\begin{proof}
 It is clear that smooth toric stacks satisfy the conditions, so we focus on the converse.

 By Lemma \ref{lem:check-toric-after-quotient}, it suffices to check that $[\X/T]$ is a toric stack. By Lemma \ref{lem:quot-has-affine-diagonal}, $[\X/T]$ has affine diagonal. By Lemma \ref{lem:quot-has-red-stabilizers}, $[\X/T]$ has linearly reductive stabilizers. Thus, we have reduced to the case where $T$ is trivial and $\X$ has a dense open $k$-point.

 Consider the (finite) set of irreducible divisors of $\X$. By Lemma \ref{lem:Weil=Cartier}, these divisors are Cartier, so they are induced by line bundles $\L_1,\dots, \L_n$ with non-zero global sections $s_i\in \Ga(\X,\L_i)$. These line bundles and sections induce a morphism $\X\to [\AA^n/\GG_m^n]$. We will show that this morphism is an open immersion---and therefore that $\X$ is a toric stack---by induction on $n$.

 \subsubsection*{The case $n=0$}
 If $\X$ has no divisors, then we claim that $\X=\spec k$. By Theorem \ref{thm:local-structure}, every point of $\X$ has an open neighborhood of the form $[X/T_X]$, where $X$ is a toric variety and $T_X$ is its torus. Every point of a toric variety lies either in the torus or on a torus-invariant divisor. Since $\X$ has no divisors, $X$ can have no torus-invariant divisors.  It follows that $X$ must be a torus, and so $\X$ is covered by its dense open point.

 \subsubsection*{The case $n=1$}

 Suppose $\D\subseteq \X$ is the unique divisor. Our aim is to show that the morphism $\X\to [\AA^1/\GG_m]$ is an isomorphism.

 Applying Theorem \ref{thm:local-structure} to points of $\D$, we see that $\D$ has a dense (stacky) geometric point and that any other point must lie on the intersection of two or more distinct divisors (because this is true for torus-invariant divisors on a smooth toric variety). Since $\D$ is the unique divisor of $\X$, it has only one geometric point $p$. Aside from this point, $\X$ has only one other point: the dense open point. Applying Theorem \ref{thm:local-structure} around $p$, we get an open neighborhood of the form $[X/T]$, where $X$ is a toric variety and $T$ is its torus. But any open neighborhood $p$ must be all of $\X$, so $\X=[X/T]$ is a toric stack. Moreover, the toric variety $X$ has precisely one torus-invariant divisor, so $[X/T]=[\AA^1/\GG_m]$.

 \subsubsection*{The general case $n\ge 2$}

 Suppose $\D_1,\dots, \D_n$ are the divisors cut out by the sections $s_i\in \Ga(\X,\L_i)$. By induction on $n$, $\X\setminus \D_i$ is a smooth toric stack, so the morphism $\X\setminus \D_i\to [\AA^{n-1}/\GG_m^{n-1}]$ is an open immersion. On the other hand, this morphism is part of the following cartesian diagram:
 \[\xymatrix@!0 @R+1.5pc @C+4.5pc{
  & \X\setminus \D_i \ari[r]\ar[d] & \X\ar[d]\\
  \llap{$[\GG_m/\GG_m]\times\;$} [\AA^{n-1}/\GG_m^{n-1}]\ar@{=}[r] & [\AA^{n-1}/\GG_m^{n-1}] \ari[r] & [\AA^n/\GG_m^n]
 }\]
Therefore, we see that the morphism $\X\to [\AA^n/\GG_m^n]$ restricts to an open immersion $\U:=\X\setminus (\D_1\cap\cdots\cap \D_n) \to [(\AA^n\setminus \{0\})/\GG_m^n]$. If $\D_1\cap \cdots\cap \D_n=\varnothing$, then we are done, so we may assume $\Z=\D_1\cap \cdots\cap \D_n$ is non-empty. Then any subset of divisors intersect, but the divisors are distinct, so $\X\to [\AA^n/\GG_m^n]$ is set-theoretically surjective. In particular, $\U\to [(\AA^n\setminus \{0\})/\GG_m^n]$ is an isomorphism. By Theorem \ref{thm:local-structure}, $\Z$ is of codimension $n\ge 2$. So by Lemma \ref{lem:stein-open-immersion}, $\X\to [\AA^n/\GG_m^n]$ is Stein.

 By Theorem \ref{thm:local-structure}, $\X$ is Zariski locally a quotient of a smooth toric variety. In particular, the divisors are smooth and have simple normal crossings, so by Proposition \ref{prop:smooth-maps-to-[An/Gm^n]}, $\X\to [\AA^n/\GG_m^n]$ is smooth (but may not be representable). So $\X\times_{[\AA^n/\GG_m^n]}\X\to [\AA^n/\GG_m^n]$ is smooth and is an isomorphism over the complement of the closed point of $[\AA^n/\GG_m^n]$. Since smooth maps are codimension preserving (Lemma \ref{lem:smooth=>codim-preserving}), the complement of $\U\cong\U\times_{[\AA^n/\GG_m^n]}\U\subseteq \X\times_{[\AA^n/\GG_m^n]}\X$ is of codimension $n\ge 2$. In particular, the diagonal $\Delta_{\X/[\AA^n/\GG_m^n]}$ is Stein by Lemma \ref{lem:stein-open-immersion}.

 Consider the following diagram, in which the square is cartesian:
 \[\xymatrix@C+2pc{
  \X\ar[r]_-{\Delta_{\X/[\AA^n/\GG_m^n]}} \ar@(ur,ul)[rr]^{\Delta_\X} & \X\times_{[\AA^n/\GG_m^n]}\X\ar[r]\ar[d] & \X\times \X\ar[d]\\
  & [\AA^n/\GG_m^n]\ar[r]^-{\Delta_{[\AA^n/\GG_m^n]}} & [\AA^n/\GG_m^n]\times [\AA^n/\GG_m^n]
 }\]
 Since $\Delta_\X$ and $\Delta_{[\AA^n/\GG_m^n]}$ are affine, we see that $\Delta_{\X/[\AA^n/\GG_m^n]}$ is affine.

 Now $\Delta_{\X/[\AA^n/\GG_m^n]}$ is Stein and affine, so it is an isomorphism. Thus, $\X\to [\AA^n/\GG_m^n]$ is a monomorphism, so it is representable \cite[Corollary 8.1.2]{lmb}, separated, and quasi-finite. Since $[\AA^n/\GG_m^n]$ is normal, Zariski's Main Theorem \cite[Theorem 16.5]{lmb} implies that $\X\to [\AA^n/\GG_m^n]$ is an open immersion.
\end{proof}

\subsection{Counterexamples}
This subsection gives interesting examples of stacks which look like they might be toric stacks, but are not.  For each example, we show how the conditions of Theorem \ref{thm:main-smooth} fail.

To begin, there are varieties $X$ that contain a dense open torus $T$, on which $T$ cannot possibly act. For example, blowing up a torus-non-invariant point on a divisor of a toric variety will produce such a variety. When working with algebraic spaces and stacks, the action can fail to extend for more subtle reasons.
\begin{example}[Torus action does not always extend]\label{eg:bug-eyed}
 Let $U$ be the affine line with a doubled origin over a field of characteristic not equal to $2$. Let $\ZZ/2$ act on $U$ by $x\mapsto -x$ and switching the two origins. Then $X=[U/(\ZZ/2)]$ is a smooth algebraic space with a dense open torus $[\GG_m/(\ZZ/2)]\cong \GG_m$. This space is a ``bug-eyed cover'' of $\AA^1$ \cite{kollar:bug-eyed}. We claim that the torus cannot act on $X$.

 If it did, the \'etale cover $\AA^1\to X$ would be toric, inducing the degree 2 map of tori $\GG_m\to \GG_m/(\ZZ/2)$. This map induces an isomorphism of $\GG_m$-representations between the tangent space to $\AA^1$ at $0$ and the tangent space to $X$ at ``the bug eye''. It would follow that the 1-dimensional weight 1 representation of $\GG_m$ (i.e.~the tangent space to $\AA^1$ at 0) factors through the degree 2 map $\GG_m\to \GG_m/(\ZZ/2)$, which it clearly does not.
\end{example}

Although we have shown that the previous example is not a toric stack, it is nonetheless interesting to observe that it can be extended.
\begin{example}
 Consider the stack $\X=[\AA^2/(\ZZ/2\ltimes \GG_m)]$, with the action given by $(0,t)\cdot (x,y)=(tx,t^{-1}y)$ and $(1,1)\cdot(x,y)=(-y,x)$. This contains the ``bug-eyed cover'' from the previous example as an open substack (it is the image of $\AA^2\setminus \{0\}$).

 What makes $\X$ particularly interesting is that it is a smooth stack with a dense open torus (whose action does not extend, of course) so that the complement of the torus is a single \emph{singular} divisor (the union of the axes in $\AA^2$ is smooth over this divisor).
\end{example}

\begin{remark}\label{rmk:affine-diagonal<->separated}
 A notable difference between toric stacks and toric varieties is that toric varieties are required to be separated. Artin stacks are almost never separated, but the affine diagonal condition seems to play the role of separatedness. Heuristically, toric stacks are entirely controlled by their torus-invariant divisors (this is made precise by \cite[Theorem 7.7]{toricartin1} and the canonical stack construction in \cite[Section 5]{toricartin1}). The condition that a stack have affine diagonal ``forces all non-separatedness to occur in codimension 1'' and therefore be controlled by the combinatorics.
\end{remark}

\begin{example}[Non-affine diagonal]\label{Eg:non-affine-diag}
 The affine plane with a doubled origin is a scheme with a torus action satisfying nearly all the conditions of Theorem \ref{thm:main-smooth}, except it does not have affine diagonal.

 Note however, that the affine \emph{line} with a double origin does have affine diagonal, and is in fact a toric stack. It is $[(\AA^2\setminus \{0\})/\GG_m]$, where $\GG_m$ acts by $t\cdot (x,y)=(tx,t^{-1}y)$.
\end{example}

In the world of stacks, non-affine diagonals can occur in stranger ways as well.
\begin{example}[Non-\emph{separated} diagonal]\label{Eg:B-of-non-sep}
 Let $G$ be the affine line with a doubled origin, regarded as a relative group over $\AA^1$. The fibers away from the origin are trivial, and the fiber over the origin is given the structure of $\ZZ/2$. We see that $G\to \AA^1$ is an \'etale relative group scheme. 

 Let $\X=[\AA^1/G]$, where $G$ acts trivially on $\AA^1$. Since $\X$ has a representable \'etale cover by $\AA^1$, it is of finite type, normal, and of global type. Moreover, it has linearly reductive stabilizers at geometric points. It contains a dense open torus $T\cong \GG_m$ which acts on it. However, $\X$ has \emph{non-separated} diagonal, so it is not a toric stack.
\end{example}

In Theorem \ref{thm:main-smooth}, the condition that $\X$ have linearly reductive stabilizers is necessary. It is easy to produce many examples of stacks that satisfy all the other conditions of the theorem, but fail to be toric stacks.
\begin{example}[Non-reductive stabilizers]\label{Eg:non-diag-stabilizers}
 If $X$ is any smooth scheme of finite type with an action of an affine group $G$ and a dense open copy of $G$, then $\X=[X/G]$ has a dense open torus (the \emph{trivial} torus $[G/G]$) which acts (trivially). Since $G$ is affine, $\X$ has affine diagonal by Lemma \ref{lem:quot-has-affine-diagonal}. By Remark \ref{rmk:everything-is-global-type}(\ref{everything-gt-1}), $\X$ is of global type.

 As a concrete case, consider the stack $\X=[M_{2\times 2}/GL_2]$, where the action of $GL_2$ on $M_{2\times 2}\cong \AA^4$ is given by left multiplication. This stack does not satisfy the conditions of Theorem \ref{thm:main-smooth} since not all stabilizers are linearly reductive. For example, $\stab\smat{1&0\\ 0&0}\cong \GG_m\ltimes \GG_a$.
\end{example}

\section{Main Theorem: Non-smooth Case}\label{sec:main}
We now extend the results of the previous section to handle the case of singular stacks.  In the introduction of \cite{toricartin1}, we mentioned that smooth toric stacks can serve as better-behaved substitutes for toric varieties: sometimes it is easier to prove a result on the canonical smooth stack overlying a toric variety (see \cite[\S5]{toricartin1}) and then ``push the result down'' to the toric variety.  This section yields a concrete instance of this philosophy.  Indeed, we will prove our main theorem for singular toric stacks $\X$ by making use of the canonical stack over $\X$ and Theorem \ref{thm:main-smooth}.
\begin{theorem}\label{thm:main}
 Let $\X$ be an Artin stack of finite type over an algebraically closed field $k$ of characteristic $0$. Suppose $\X$ has an action of a torus $T$ and a dense open substack which is $T$-equivariantly isomorphic to $T$. Then $\X$ is a toric stack if and only if the following conditions hold:
 \begin{enumerate}
  \item $\X$ is normal,
  \item $\X$ has affine diagonal,
  \item geometric points of $\X$ have linearly reductive stabilizers, and
  \item $[\X/T]$ is of global type (see Remark \ref{rmk:everything-is-global-type}).
 \end{enumerate}
\end{theorem}
\begin{proof}
 It is clear that any toric stack satisfies the conditions.

 As in the proof of Theorem \ref{thm:main-smooth}, we immediately reduce to the case where $T$ is trivial and $\X$ has a dense open point. By Lemma \ref{lem:check-toric-after-quotient}, it suffices to check that $[\X/T]$ is a toric stack. By Lemma \ref{lem:quot-has-affine-diagonal}, $[\X/T]$ has affine diagonal. By Lemma \ref{lem:quot-has-red-stabilizers}, $[\X/T]$ has linearly reductive stabilizers. Normality and reducedness are local in the smooth topology, so those hypotheses descend from $\X$ to $[\X/T]$.

 Applying Theorem \ref{thm:local-structure}, we obtain an open cover $\bigsqcup \X_i\to \X$, where each $\X_i$ is of the form $[X_i/T_i]$, with $X_i$ an affine toric variety. Let $\Y_i$ be the canonical smooth toric stack over $\X_i$ (see \cite[\S 5]{toricartin1}). Since the maps $\Y_i\to \X_i$ have the universal property in \cite[Proposition 5.5]{toricartin1}, they are canonically isomorphic when pulled back to intersections, so they glue together into a smooth stack $\Y\to \X$.

 The diagonal of $\Y$ is affine by Lemma \ref{lem:canonical-stack-affine-diagonal}, and it satisfies the other hypotheses of Theorem \ref{thm:main-smooth} by construction (they are local conditions which all canonical stacks satisfy), so $\Y$ is a smooth toric stack. So by Theorem \ref{thm:local-gms-data}, $\X$ is a toric stack.
\end{proof}

\begin{remark}\label{rmk:tyomkin}
 As an application of Theorem \ref{thm:main}, we see that the toric stacks defined by Tyomkin in \cite[\S 4]{tyomkin} via gluing are in fact globally quotients of toric varieties by subgroups of their tori. The fact that Tyomkin's stacks are constructed from toric stacky data (\cite[Definition 4.1]{tyomkin}) implies that they have affine diagonal. The other conditions of Theorem \ref{thm:main} are clearly satisfied.
\end{remark}

\begin{remark}[There are no ``toric algebraic spaces'']\label{rmk:no-toric-spaces}
 Suppose $X$ is a toric variety and $G$ is a subgroup of the torus of $X$ such that the toric stack $[X/G]$ is an algebraic space (i.e.~$G$ acts \emph{freely} on $X$). Then $[X/G]$ is a scheme. This can be seen by noting that $X$ is covered by torus invariant (and therefore $G$-invariant) open affines; the stack quotient of an affine scheme by a free action of $G$ is an affine scheme, so $[X/G]$ has a Zariski open cover by affine schemes. Theorem \ref{thm:main} therefore shows that any toric algebraic space satisfying the conditions of the theorem is a scheme.
 
 Any separated toric algebraic space over $\mathbb C$ is a toric variety \cite[Theorem 1]{hausen}. By reducing to $\mathbb C$, Skowera has shown that any separated toric algebraic space over an algebraically closed field of characteristic 0 satisfies condition $4$ (the other conditions are immediate), and therefore that any such algebraic space is a toric variety \cite[Theorem 3.7]{skowera}.
\end{remark}

\begin{example}[A (non-normal) toric algebraic space]\label{eg:a-toric-space}
 If the conditions of Theorem \ref{thm:main} are not imposed, then there are toric algebraic spaces which are not schemes. For example, the ``line with a doubled tangent direction'' \cite[Example 1 in the Introduction]{knutson} is a toric algebraic space. Note that the normalization of this algebraic space is the non-separated line, which \emph{does} arise from a stacky fan \cite[Example 2.12]{toricartin1}.
\end{example}

\section{Application: A Good Moduli Space of a Toric Stack is Toric}

As an application of Theorem \ref{thm:main}, we show that if a toric stack $\X$ has a good moduli space $X$, then $X$ must be toric and the good moduli space morphism $\X\to X$ is toric. This is used in the proof of \cite[Corollary 6.5]{toricartin1}, which gives a combinatorial criterion for when a toric stack has a variety as a good moduli space. See also \cite[Remark 6.7]{toricartin1}, which gives a criterion for when a toric stack has a good moduli space.

\begin{lemma}\label{lem:can-remove-divisors-with-high-codimension-image}
  Suppose $f\colon \X\to \Y$ is a good moduli space morphism from a smooth stack. Let $\Z\subseteq \Y$ be a closed substack of codimension at least $2$, and let $\D\subseteq \X$ be the union of the components of $f^{-1}(\Z)$ which are of codimension $1$ in $\X$. Then the restriction $g\colon \X\setminus \D\to \Y$ is a good moduli space morphism.
\end{lemma}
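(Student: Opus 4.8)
The plan is to verify the two defining properties of a good moduli space morphism for $g\colon \X\setminus\D\to\Y$: that $g$ is cohomologically affine, and that $g$ is Stein, i.e.\ that $\O_\Y\to g_*\O_{\X\setminus\D}$ is an isomorphism. Write $\U=\X\setminus\D$ and $\V=\Y\setminus\Z$, and note that since $\D\subseteq f^{-1}(\Z)$ we have $f^{-1}(\V)=\X\setminus f^{-1}(\Z)\subseteq\U$, whose complement inside $\U$ is $f^{-1}(\Z)\setminus\D$. The crucial geometric observation is that this complement has codimension at least $2$ in $\U$: by construction $\D$ contains every codimension-$1$ component of $f^{-1}(\Z)$, so what remains has codimension $\ge 2$ provided no entire component of $\X$ maps into $\Z$. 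The latter holds because $\X$ is smooth, so its irreducible components are open and closed; $f$ therefore restricts to a good moduli space morphism on each of them, whose image is a union of components of $\Y$, and no such union can lie in the codimension-$\ge 2$ locus $\Z$. Hence $f^{-1}(\V)$ is dense in $\U$ and $f^{-1}(\Z)\setminus\D$ has codimension $\ge 2$ there.

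For cohomological affineness I would first note that $\D$ is a Cartier divisor: it is a Weil divisor on the smooth stack $\X$, hence Cartier by Lemma~\ref{lem:Weil=Cartier}. The inclusion of the complement of an effective Cartier divisor is an affine morphism, so $j\colon\U\hookrightarrow\X$ is affine, in particular cohomologically affine. Since $f$ is a good moduli space morphism it is cohomologically affine, and cohomological affineness is stable under composition, so $g=f\circ j$ is cohomologically affine. This step uses only that $\D$ is Cartier, not the codimension hypothesis on $\Z$.

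For the Stein condition I would argue exactly as in the proof of Corollary~\ref{cor:stein-birational-morphisms}. Let $k\colon f^{-1}(\V)\hookrightarrow\U$ be the inclusion. Since $\U$ is normal (being open in the smooth stack $\X$) and its complement has codimension $\ge 2$, Lemma~\ref{lem:stein-open-immersion} shows $k$ is Stein, so $k_*\O_{f^{-1}(\V)}=\O_\U$. On the other hand $g\circ k$ factors as $f^{-1}(\V)\to\V\hookrightarrow\Y$, where the first map is the base change of $f$ along $\V\hookrightarrow\Y$, hence a good moduli space morphism and therefore Stein, and the second map is Stein by Lemma~\ref{lem:stein-open-immersion} because $\Y$ is normal and $\Z$ has codimension $\ge 2$. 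A composition of Stein morphisms is Stein, so $g\circ k$ is Stein, and therefore
\[
 g_*\O_\U \;=\; g_*k_*\O_{f^{-1}(\V)}\;=\;(g\circ k)_*\O_{f^{-1}(\V)}\;=\;\O_\Y .
\]
Here I use that $\Y$ is normal, which holds because it is the good moduli space of the normal (indeed smooth) stack $\X$: integral closedness of $\O_\Y=f_*\O_\X$ follows from that of $\O_\X$, since a relation of integral dependence pulls back along $f$ and normality of $\X$ places the function in $\O_\X$.

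I expect the main obstacle to be the codimension bookkeeping of the first paragraph, namely checking that deleting exactly the codimension-$1$ components of $f^{-1}(\Z)$ leaves a codimension-$\ge 2$ locus in $\U$; this rests on the claim that no component of $\X$ is collapsed into $\Z$, which is where the codimension-$\ge 2$ hypothesis on $\Z$ and the smoothness of $\X$ enter. Once this geometry is in place, both the cohomological-affineness step (via the Cartier divisor $\D$) and the Stein step (via Lemma~\ref{lem:stein-open-immersion} and the composition trick of Corollary~\ref{cor:stein-birational-morphisms}) are formal.
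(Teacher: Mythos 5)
Your proposal is correct and follows essentially the same route as the paper: cohomological affineness via the Cartier divisor $\D$ and stability of cohomologically affine morphisms under composition, and the Stein property by factoring through the codimension-$\ge 2$ open immersions and the base-changed good moduli space morphism over $\Y\setminus\Z$, exactly as in the paper's pushforward computation. Your first paragraph merely makes explicit a codimension bookkeeping point (that no component of $\X$ collapses into $\Z$) which the paper takes for granted.
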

\begin{proof}
  Every divisor of a smooth stack is Cartier by Lemma \ref{lem:Weil=Cartier}, so the inclusion $\X\setminus \D\to \X$ is cohomologically affine. As cohomologically affine morphisms are stable under composition, $g$ is cohomologically affine.

  To see that $g$ is Stein, consider the following diagram, in which the outer square is cartesian:
  \[\xymatrix{
    \X\setminus f^{-1}(\Z)\ar[d]_{f'} \ar@{^(->}[r]^-j & \X\setminus \D\ar@{^(->}[r]^-{j'} & \X\ar[d]^f\\
    \Y\setminus \Z\ar@{^(->}[rr]^-i && \Y
  }\]
  By assumption, $i$ and $j$ are inclusions of open substacks whose complements have codimension at least 2, so these maps are Stein by Lemma \ref{lem:stein-open-immersion}. Since good moduli space morphisms are stable under base change \cite[Proposition 4.6(i)]{Alper:good}, $f'$ is Stein. We then have that
  \[
    g_*\O_{\X\setminus \D} = f_*j'_*\O_{\X\setminus \D} = f_*j'_*j_*\O_{\X\setminus f^{-1}(\Z)} = i_*f'_*\O_{\X\setminus f^{-1}(\Z)} = \O_\Y.\qedhere
  \]
\end{proof}

\begin{proposition}
\label{prop:toric-alg-sp-gms}
  Suppose $\X$ is a toric stack, $f\colon \X\to Y$ is a good moduli space morphism, and $Y$ is an algebraic space. Then $Y$ is a toric stack (and therefore a scheme by Remark \ref{rmk:no-toric-spaces})\footnote{Note that $Y$ may not be a \emph{variety} since it may be non-separated.} and $f$ is a toric morphism.
\end{proposition}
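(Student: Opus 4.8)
The plan is to verify the four hypotheses of Theorem~\ref{thm:main} for $Y$ and thereby deduce that $Y$ is a toric stack; since $Y$ is assumed to be an algebraic space, Remark~\ref{rmk:no-toric-spaces} then upgrades this to the statement that $Y$ is a scheme. Two of the four hypotheses are essentially free: condition~(3) holds because $Y$ is an algebraic space, so all of its geometric stabilizers are trivial and hence linearly reductive; and condition~(2) is exactly the content of Lemma~\ref{l:affine-diag-gms}, since $\X$, being toric, has affine diagonal. Condition~(1), normality of $Y$, follows because a good moduli space is locally the spectrum of a ring of invariants $A^G$ of a normal domain $A$ under a linearly reductive $G$, and such a ring of invariants is again a normal domain; equivalently, normality descends along $f$ since $\O_Y = f_*\O_\X$.

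First I would produce the torus action and dense torus on $Y$. Because $f$ is a good moduli space morphism, it is universal for maps to algebraic spaces \cite{Alper:good}; applying this to the composite $T\times\X \to \X \xrightarrow{f} Y$ and using that $\id_T\times f\colon T\times\X\to T\times Y$ is again a good moduli space morphism (these are stable under base change), the $T$-action on $\X$ descends uniquely to a $T$-action on $Y$ making $f$ equivariant. The dense open torus of $Y$ will come from the fact that the open torus $T\subseteq\X$ is itself an affine scheme, hence a cohomologically affine open substack whose good moduli space is $T$; it will appear in $Y$ as a dense open subscheme equivariantly isomorphic to $T$.

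The main obstacle, and the step around which the argument is organized, is condition~(4): that $[Y/T]$ is of global type. By Remark~\ref{rmk:everything-is-global-type}(\ref{everything-gt-1}) it suffices to cover $Y$ by $T$-invariant affine opens $Y_i$ (necessarily normal), for then $[Y_i/T]\hookrightarrow[Y/T]$ are representable \'etale and exhibit $[Y/T]$ as covered by global-type opens. To produce such a cover I would start from the standard cover of the toric stack $\X$ by $T$-invariant cohomologically affine open substacks $\X_i$; the point to check is that each $\X_i$ is \emph{saturated} with respect to $f$, so that its good moduli space $Y_i = \spec\Gamma(\X_i,\O)$ is an affine open of $Y$ and the $Y_i$ cover $Y$. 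This saturation is where the real work lies. I expect to handle it by passing to the canonical smooth toric stack $\pi\colon\Y\to\X$ of \cite[\S5]{toricartin1}, which is itself a good moduli space morphism, so that $g = f\circ\pi\colon\Y\to Y$ is a good moduli space morphism from a \emph{smooth} toric stack; on $\Y$ the divisorial geometry is transparent, and Lemma~\ref{lem:can-remove-divisors-with-high-codimension-image} lets one discard those torus-invariant divisors of $\Y$ whose image in $Y$ has codimension at least $2$ while remaining a good moduli space morphism, matching the remaining invariant divisors of $\Y$ with those of $Y$ and pinning down the saturated affine charts.

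With all four hypotheses in hand, Theorem~\ref{thm:main} shows $Y$ is a toric stack, and Remark~\ref{rmk:no-toric-spaces} shows it is in fact a (possibly non-separated) scheme. It remains to see that $f$ is a toric morphism: $f$ is $T$-equivariant by construction, and it restricts to the identity on the common dense torus $T$ of $\X$ and $Y$, so it is toric in the sense of \cite{toricartin1}. The steps I expect to be genuinely routine are conditions~(1)--(3) and the descent of the action; the saturation/global-type step is the one requiring care, and it is precisely what the preceding Lemma~\ref{lem:can-remove-divisors-with-high-codimension-image} and the canonical stack construction are designed to supply.
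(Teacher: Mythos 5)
Your setup matches the paper's: both arguments reduce to the smooth case via the canonical stack, invoke Lemma~\ref{lem:can-remove-divisors-with-high-codimension-image} to discard divisors with small image, verify normality, affine diagonal (Lemma~\ref{l:affine-diag-gms}) and triviality of stabilizers for free, descend the $T$-action via the universal property of good moduli spaces, and aim to conclude by Theorem~\ref{thm:main}. But the two hypotheses that carry all the difficulty --- that $Y$ has a \emph{dense open torus} and that $[Y/T_Y]$ is \emph{of global type} --- are exactly the ones your proposal does not actually establish, and the sketch you give for them has concrete problems. First, it is not true that the open torus $T\subseteq\X$ ``appears in $Y$ as a dense open subscheme equivariantly isomorphic to $T$'': the torus of $Y$ is in general a proper quotient of $T$. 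For $\X=[\AA^2/\GG_m]$ with weights $(1,1)$, the torus of $\X$ is $[\GG_m^2/\GG_m]\cong\GG_m$ but $Y=\spec k[x,y]^{\GG_m}=\spec k$, so $T$ collapses. For the same reason your closing claim that $f$ ``restricts to the identity on the common dense torus'' is false; a toric morphism only induces a homomorphism of tori. Second, the saturation claim underlying your global-type argument is false as stated: the cohomologically affine $T$-invariant opens of $\X$ need not be saturated for $f$. For $\X=[\AA^2/\GG_m]$ with weights $(1,-1)$ and $Y=\spec k[xy]=\AA^1$, the open substack $[(\AA^1\times\GG_m)/\GG_m]$ is cohomologically affine and torus-invariant, but its $f$-saturation is all of $\X$. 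So ``match divisors and pin down the saturated charts'' is not a routine consequence of Lemma~\ref{lem:can-remove-divisors-with-high-codimension-image}; it is the entire content of the proposition, and no mechanism is supplied for it.

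The paper supplies that mechanism by an induction on the number of $T$-invariant divisors of the (smooth, divisor-pruned) $\X$, with a three-way case split. If some irreducible $T$-invariant divisor $\D$ dominates $Y$, then $\D\to Y$ is again a good moduli space morphism by \cite[Lemma 4.14]{Alper:good}, and $\D$ is an essentially trivial gerbe over a smooth toric stack $\bbar\D$ with strictly fewer invariant divisors \cite[Proposition 7.20]{toricartin1}, so induction applies and identifies the torus of $Y$ as a quotient of that of $\bbar\D$, hence of $T$. If every invariant divisor maps to a divisor $D_i\subseteq Y$ and $\bigcap D_i=\varnothing$, then induction applied to the complements $\X\setminus f^{-1}(D_i)$ shows $Y$ is Zariski-locally toric, which yields both the dense torus and global type of $[Y/T_Y]$. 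Finally, if $\bigcap D_i\neq\varnothing$ then $\bigcap\D_i\neq\varnothing$ by \cite[Theorem 4.16(iii)]{Alper:good}, forcing $\X$ to be cohomologically affine, hence $\X=[\AA^n/G]$ with $G\subseteq\GG_m^n$ and $Y=\AA^n/G$ directly. You would need to either reproduce something like this induction or genuinely prove a saturation statement for a suitable (not the naive) collection of invariant opens; as written, the core of the argument is missing.
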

\begin{proof}
  To prove the result, we may replace $\X$ by its canonical stack, so we may assume $\X$ is smooth. By Lemma \ref{lem:can-remove-divisors-with-high-codimension-image}, we may remove all torus-invariant divisors in $\X$ whose image in $Y$ have codimension larger than 1, so we may assume the image of every torus-invariant divisor of $\X$ is either a divisor or equal to $Y$.

  By \cite[Proposition 4.16(viii)]{Alper:good}, $Y$ is normal. The stabilizers of points of $Y$ are trivial and therefore linearly reductive. By Lemma \ref{l:affine-diag-gms}, $Y$ has affine diagonal. To show $Y$ is toric by Theorem \ref{thm:main}, it remains to show that $Y$ has an action of a dense open torus and that the quotient by that torus is of global type.

  Let $T\subseteq \X$ be the dense torus. Since good moduli space morphisms from locally noetherian stacks are initial among maps to algebraic spaces \cite[Theorem 6.6]{Alper:good}, the action of $T$ on $\X$ induces an action of $T$ on $Y$, making $f$ a $T$-equivariant morphism.

  We induct on the number of $T$-invariant divisors of $\X$. If $\X$ has no $T$-invariant divisors, then $\X=T$ has good moduli space $Y=T$, so $f$ is an isomorphism.

  If there is an irreducible $T$-invariant divisor $\D\subseteq \X$ which dominates $Y$, then by \cite[Lemma 4.14]{Alper:good}, $\D\to Y$ is a good moduli space morphism. By \cite[Proposition 7.20]{toricartin1}, $\D$ is an essentially trivial gerbe over a smooth toric stack $\bbar \D$. We then have the induced good moduli space morphism $\bbar \D\to Y$. Since $\bbar\D$ has fewer torus-invariant divisors than $\X$, we have by the inductive hypothesis that $Y$ is toric and that $\bbar\D\to Y$ is a toric morphism. Since the torus of $\bbar\D$ is a quotient of the torus of $\X$, the map $\X\to Y$ is toric.

  We may therefore assume that the image of every $T$-invariant divisor $\D_i$ of $\X$ is a divisor $D_i$ in $Y$. By induction, the complement of any $D_i$ in $Y$ is toric and the good moduli space morphism from $\X\setminus f^{-1}(D_i)$ is toric. In particular, $Y$ has a dense open torus $T_Y$ whose multiplication extends to an action on $Y\setminus \bigcap D_i$. If $\bigcap D_i=\varnothing$, then $Y$ is Zariski locally a toric stack, so $[Y/T_Y]$ is of global type.

  We may therefore assume that $\bigcap D_i\neq\varnothing$. By \cite[Theorem 4.16(iii)]{Alper:good}, $\bigcap \D_i\neq\varnothing$, so $\X$ is cohomologically affine. Since $\X$ is smooth, we have that $\X=[\AA^n/G]$ for some subgroup $G\subseteq \GG_m^n$. It follows that $Y=\AA^n/G$ is toric and the map $[\AA^n/G]\to \AA^n/G$ is toric.
\end{proof}



\end{document}